\theoremstyle{plain}
\newtheorem{thm}{Theorem}
\newtheorem{lem}[thm]{Lemma}
\newtheorem{cor}[thm]{Corollary}
\theoremstyle{definition}
\newtheorem{dfn}[thm]{Definition}
\theoremstyle{remark}
\newtheorem{rmk}[thm]{Remark}
\DeclareMathOperator{\Tr}{Tr}
\DeclareMathOperator{\im}{im}
\DeclareMathOperator{\Res}{Res}
\newcommand*{\df}{\mathrel{\vcenter{\baselineskip0.5ex \lineskiplimit0pt
                     \hbox{\scriptsize.}\hbox{\scriptsize.}}} =}
\providecommand{\abs}[1]{\left\lvert#1\right\rvert}
\newcommand{\QQ}{\mathbf{Q}}
\newcommand{\ZZ}{\mathbf{Z}}
\begin{document}

 \title[$p$-adic vertex algebras]{Invitation to $p$-adic Vertex Algebras}
\title{Invitation to $p$-adic Vertex Algebras}

\author{Cameron Franc}
\address{Department of Mathematics, McMasters University, Hamilton, Ca}
\curraddr{}
\email{cameron.franc@gmail.com}
\thanks{}

\author{Geoffrey Mason}
\address{Department of Mathematics, University of California, Santa Cruz}
\curraddr{}
\email{gem@ucsc.edu}
\thanks{We thank the Simons Foundation, grant $\#427007$, for its support, and NSERC for support via a Discovery grant.}

\subjclass[2020]{Primary: 17A99, 11F85, 47B01}

\date{}

\begin{abstract} An overview of the authors' ideas about the process of completing a $p$-adically normed space in the setting of vertex operator algebras.\ We focus in particular on the $p$-adic Heisenberg VOA and its connections with $p$-adic modular forms.
\end{abstract}

\maketitle

\tableofcontents
 
  \section{Introduction}
  
The idea of completion is a basic technique in mathematics.\ It can be carried out in a topological or algebraic setting, and even categorically.\ In the context of completing rational vector spaces with respect to a $p$-adic norm, this process is referred to as \emph{localizing}. The purpose of the present paper is to explain the process of completion (based on an ultrametric norm) as it pertains to vertex algebras, following two papers of the authors \cite{FM1}, \cite{FM2}.\ It is an expanded version of a short talk on this subject presented by the second author at the joint AMS/UMI meeting in Palermo.\ Before going into details we will discuss \textit{why} one might want a local theory of vertex algebras.

\medskip
The mathematical theory of vertex algebras has thus far been developed almost exclusively over the field of complex numbers.\ Yet according to \cite{DKKVZ}, $p$-adic mathematical physics dates back to 1987.\
 In current $p$-adic quantum mechanics   the state space
is nonarchimedean  but the fields are invariably complex-valued; $p$-adic fields and  $p$-adic locality appear to be absent, although there have been calls for such a theory,
e.g., \cite{Mar}.\ Our $p$-adic vertex operator algebras fill this lacuna.\ They amount to the axiomatization of a chiral half of a nonArchimedean 2-dimensional bosonic CFT in which the traditional Hilbert space is replaced by a $p$-adic Banach space.

\medskip
Mathematically, one of our main motivations comes from \textit{the unreasonable effectiveness of modular forms in VOA theory.}\ Taking this maxim seriously and locally led us  to attempt to incorporate \textit{$p$-adic modular forms} into the theory of vertex algebras.\ 
There are two rather different approaches to $p$-adic modular forms.\ Serre originally introduced these objects \cite{S1} and his approach
is elementary in the sense that it requires  only a relatively modest background.\ Serre's work was subsequently generalized by Katz \cite{Ka}.\ This approach is based on the geometric
perspective of modular forms as sections of bundles over the moduli space of elliptic curves.\ We shall only employ Serre's $p$-adic modular forms here, although we are in no doubt that
a geometric approach will be necessary to properly understand $p$-adic vertex algebras.

\medskip
Apart from $p$-adic modularity, there are potential applications of $p$-adic vertex algebras to the  study of  \textit{genus theory} for  vertex algebras.\ The general idea is based on the well-known analogy between VOA theory and  lattice theory.\ Thus one may anticipate the emergence of local-global principles for vertex algebras in which  $p$-adic vertex algebras participate.\
 For further background on this new subject, see \cite{Mor} and M\"{o}ller's lecture at this conference \cite{Moll}.
 
  \medskip
  Now let us return to the contents of the present paper.\ Our goal is to explain the results in
  \cite{FM1}, \cite{FM2} although we shall here adopt a rather different perspective.\ In \textit{loc.\ cit.}\ our viewpoint was essentially number-theoretic and to explain this let $S_{alg}$ denote the standard Heisenberg vertex operator algebra of central charge $1$ defined over the base field $\mathbf{Q}_p$ (the idiosyncratic notation is justified below.)\ It is known  that the $1$-point function (actually, a renormalized version 
  of this) defines a linear surjection
  \begin{eqnarray}\label{fdfn}
  S_{alg}\stackrel{f}{\longrightarrow} Q
  \end{eqnarray}
  onto the space $Q:=\mathbf{Q}_p[E_2, E_4, E_6]$ of quasimodular forms.\ One way to define the space of $p$-adic modular forms $M_p$
  \textit{a la} Serre  is as the completion of $Q$
  with respect to a certain ultrametric norm and in this instance at least, the 'incorporation of $p$-adic modular forms into vertex algebra theory'
  refers to the completion of (\ref{fdfn}) to a commuting diagram of continuous maps
  \begin{eqnarray}\label{commdiag}
  \xymatrix{ S_1\ar[r]^{\hat{f}}& M_p\\
  S_{alg}\ar[u]\ar[r]_f& Q\ar[u]
  }
  \end{eqnarray}
  
  We will explain how to make sense of this.\ To be useful, $S_{alg}$ needs to be equipped with a 'compatible' $p$-adic norm, and the completion $S_1$
  of $S_{alg}$ is then a $p$-adic Banach space which should be some kind of vertex  object (it is our $p$-adic Heisenberg VOA in this particular case); $\hat{f}$ will then be the $p$-adic $1$-point function for $S_1$.\ A major number-theoretic question for us is whether or not 
  $\hat{f}$ is (like $f$) a surjection.\ This question remains open but it has guided our ideas from the very outset of this work.

  \medskip
  Setting aside $p$-adic modular forms for now, it is informative to work in a more categorical context, so we begin with a pair of categories $\mathbf{C}$ and $\mathbf{D}$.\ The objects of 
  $\mathbf{D}$ are \textit{$p$-adically normed VOAs}, i.e., classical VOAs defined over the field of $p$-adic numbers $\mathbf{Q}_p$ and equipped with a compatible $p$-adic norm.\ The main issue here is to decide what a $p$-adic field, or $p$-adic vertex operator, should be.\ The objects of $\mathbf{C}$ are our $p$-adic VOAs, the chiral half of a $2$-dimensional $p$-adic bosonic CFT. Their underlying Fock spaces are $p$-adic Banach spaces.
The process of completing the objects in 
$\mathbf{D}$ produces an equivalence of categories 
$\mathbf{C}\sim\mathbf{D}$.\
The completion process imposes on the completed space a $p$-adic version of the usual Jacobi identity which, to an untrained eye, looks identical
to the standard Jacobi identity.\ This is very beneficial inasmuch as we may apply techniques and results from classical vertex algebra theory
  to $p$-adic VOAs.

  \medskip
  Having pointed out the similarities between vertex algebras and their $p$-adic counterparts, we should emphasize that there are also significant differences.\ Such differences mainly pertain to the  classical VOA axioms concerning the spectral decomposition into $L(0)$-eigenspaces.\ These typically  \textit{cannot} extend to the $p$-adic VOAs, because $p$-adic VOAs are generally of uncountable dimension over $\mathbf{Q}_p$, whereas of course classical VOAs have countable dimension.\ This issue, at least as far as the $p$-adic Heisenberg VOA $S_1$ is concerned, is related to  (\ref{commdiag}) and the connection with $p$-adic modular forms sheds light on the situation.\ Thus we discuss this phenomenon only for $S_1$, though much of our work applies more broadly.

  \medskip
  The previous remarks suggest a closer examination of the spectral properties of the bounded operator $L(0)$ in its action on $S_1$.\ It turns out that the spectrum of $L(0)$ is unchanged, i.e., the point spectrum is $\mathbf{Z}_{\geq0}$ and there is no continuous spectrum.\ 
  
  \medskip
  For reasons that we do not fully understand, the genus $1$
  operator $L[0]$ has a much bigger point spectrum than $L(0)$, in fact  every element in 
  $\mathbf{Z}_p$ is an eigenvalue.\ However this statement is complicated by the fact that we must treat $L[0]$ as an unbounded operator on $S_1$.\
  Nevertheless we can use Serre's theory of $p$-adic modular forms and their $p$-adic weights to make a preliminary analysis of the point spectrum of $L[0]$.\ It turns out that many states in
  $S_1$ naturally carry a $p$-adic weight that lies in so-called weight space $X$, which is the profinite group where the weights of $p$-adic modular forms reside.\ This new notion of $p$-adic weight of a state in $S_1$
  generalizes the usual conformal weight in a standard VOA and, at least for now, requires $p$-adic modularity to establish its existence.

  \medskip
 The subject of $p$-adic vertex algebras involves a pleasing mixture of ideas from
 $p$-adic functional analysis, $p$-adic modular forms, and of course vertex algebras.\ Aware that a reader may have little knowledge of any of these topics (even the third one)
 we present some background in each of them - at least enough so that an assiduous reader can follow the gist of the narrative.
  
  \medskip
  We are indebted to  Darlayne Addabbo, Lisa Carbone and Roberto Volpato for organizing the special session in Palermo and for affording us the opportunity to present our work.
  
  \section{Vertex algebras}\label{SVA}

In this Section we review the definition of a classical vertex algebra  over a unital, commutative ring $k$ \cite{Ma1} and some related aspects of this theory.\ Many readers will be familiar with this material and we present it here to establish notation and to facilitate a comparison with the $p$-adic version that comes later.\ Throughout, we
fix $k$ and a $k$-module $V$.\ Elements in $V$ are called \textit{states}.

\subsection{Fields on $V$}\label{SSfieldsonV}
We are concerned with  formal power series in a variable $z$ with coefficients lying in $End_k(V)$.\ They are conventionally written in the form
\begin{eqnarray}\label{dist}
a(z) = \sum_{n\in\mathbf{Z}} a(n)z^{-n-1} \in End_k(V)[[z, z^{-1}]].
\end{eqnarray}

\noindent
Here, $\{a(n)\}$ is any sequence of $k$-endomorphisms of $V$, called \textit{modes}, and the symbol $a$ is insignificant.\ A \textit{field} on $V$ is a series $a(z)$ with the property that
for all $b\in V$ we have $a(n)b=0$ for $n\gg0$.\ We set
\begin{eqnarray}\label{fd}
\mathcal{F}(V) := \{a(z) \mid \mbox{$a(z)$ is a field on $V$}\}.
\end{eqnarray}
Clearly, $\mathcal{F}(V)$ is a $k$-submodule of $End_k(V)[[z,z^{-1}]]$.\ It is natural to define
\begin{eqnarray*}
a(z)b:= \sum_{n\in\mathbf{Z}} a(n)bz^{-n-1}
\end{eqnarray*}
so that if $a(z)$ is a field then  $a(z)b\in V[z^{-1}][[z]]$.

\subsection{$Y$-maps}
A \textit{$Y$-map} is a morphism of $k$-modules 
\begin{eqnarray*}
Y: V\rightarrow \mathcal{F}(V),\ a \mapsto Y(a, z):=\sum_{n\in\mathbf{Z}}a(n)z^{-n-1}.
\end{eqnarray*}

Here the sequence of modes is associated with a specific state $a$.\ $Y(a, z)$ is the
\textit{vertex operator} associated to $a$.\ Then
\begin{eqnarray*}
Y(a, z)b=\sum_{n\in\mathbf{Z}}a(n)bz^{-n-1}
\end{eqnarray*}
and we may usefully consider $a(n)b$ as a bilinear product on $V$ for each $n$.\ In this way $V$ becomes a $k$-algebra with a countable infinity of 
bilinear products for which the 'singularities are at worst poles'.


\subsection{Vertex algebras over $k$}\label{SSVA}
A \textit{vertex algebra over $k$} is an algebra $V$ in the above sense equipped with an identity
(called the \textit{vacuum state}). Precisely, it is a triple $(V, Y, \mathbf{1})$ satisfying the following axioms:
\begin{eqnarray*}
&& (a)\ (\mbox{State-field correspondence})\ Y: V\rightarrow \mathcal{F}(V)\ \mbox{is a $Y$-map},\\
&& (b)\ (\mbox{Creativity})\ Y(a, z)\mathbf{1} = a+ O(z), \\
&& (c)\ (\mbox{Jacobi identity) for all}\ a, b, c\in V\ \mbox{and all}\  r, s, t\in \mathbf{Z}\ \mbox{we have}\\
&&\ \ \ \ \ \ \ \ \ \ \ \ \ \ \sum_{i=0}^{\infty} {r\choose i}(a(t+i)b)(r+s-i)c = \\
&&\sum_{i=0}^{\infty} (-1)^i{t\choose i}\left\{a(r+t-i)b(s+i)c- b(s+t-i)a(r+i)c  \right\}.
\end{eqnarray*}

Part (b) says that $a(-1)\mathbf{1}=a$ and $a(n)\mathbf{1}=0\ (n\geq0)$ and it can be deduced \cite{Ma1} that in fact
$Y(\mathbf{1}, z)=Id_V$.\ It is in this sense that the vacuum state $\mathbf{1}$ is a sort of identity.\ Note that the creativity axiom implies that
$Y$ injective.\ The name state-field correspondence comes from CFT.\ We point out  that  each of the sums in (c) contain only finitely many nonzero terms.\ This follows because
$Y(a, z)$ and $Y(b, z)$ both belong to $\mathcal{F}(V)$.\ 

\medskip
The crux of VA theory resides in the Jacobi identity.\ The formulation of (c) is 
opaque as it stands although it will be convenient for us when we introduce our $p$-adic Jacobi identity in Subsection \ref{SSpVA}.\ Several equivalent formulations
of (c) are available which the reader may find more enlightening (cf.\ \cite{Ma1}, for example) but we will not deal with them here.

\subsection{Vertex operator algebras}\label{SSVOA} We will only need to consider VOAs when $k$ has characteristic $0$.\
With this tacit assumption, we define a VOA over $k$ to be a quadruple $(V, Y, \mathbf{1}, \omega)$ such that $(V, Y, \mathbf{1})$ is a vertex algebra over $k$
and $\omega\in V$ (called the \textit{Virasoro} state) has the following additional properties:
\begin{eqnarray*}
&&(a)\  Y(\omega, z):=\sum_{n\in\mathbf{Z}} L(n)z^{-n-2},\ \mbox{where} \\
&&\ \ \ \  [L(m), L(n)]=(m-n)L(m+n)+ \delta_{m, -n} \tfrac{(m^3-m)}{12}cId_V\\
&&\ \ \ \  \mbox{for some $c\in k$ called the central charge of $V$}, \\
&&(b)\ (\mbox{translation covariance})\ [L((-1), Y(a,z)] = \partial_zY(a, z)\ \ (a\in V).
\end{eqnarray*}
Furthermore the operator $L(0)$ has the following spectral properties:
\begin{eqnarray*}
&&(c)\ V = \oplus_{\ell\in\mathbf{Z}} V_{\ell}\ \mbox{where $V_{\ell}$ is a free $k$-module of finite rank, $V_{\ell}=0$ for $\ell \ll 0$, and}\\
&&(d)\ V_{\ell}:=\{a\in V \mid L(0)a=\ell a\}.
\end{eqnarray*}

$L(0)$ is sometimes called the \textit{Hamiltonian} and we will adopt this language.\ Finally, in what follows we often refer to a VOA as defined above
as an \textit{algebraic} VOA in order to distinguish it from its $p$-adically completed counterpart.

\begin{rmk} In fact, part (b) can be deduced from the other axioms (for example, as in \cite{Ma1}) so it may be omitted from the definition.\ We have included it here to emphasize its importance.
\end{rmk}

\section{$p$-adic Banach spaces}
For background on $p$-adic functional analysis we refer the reader to \cite{BGR} and \cite{S}.\ 
In this paper, an \textit{ultrametric ring} is a commutative ring $R$ equipped with an absolute value $\abs{r}$ satisfying the usual axioms of an absolute value in addition to the ultrametric triangle inequality
\[
  \abs{r+s} \leq \sup\{\abs{r},\abs{s}\}.
\]
The example of interest here concerns the rational numbers endowed with the $p$-adic ultrametric defined as follows: factor $a/b \in \QQ^\times$ in the form $p^r a'/b'$ where $a',b' \in\ZZ$ are both coprime to $p$. Then set
\[
\abs{a/b} = \abs{a/b}_p = p^{-r}.
\]
As above, we will suppress the subscript $p$ in the notation for the $p$-adic absolute value.

\medskip
$\QQ_p$ is the completion of $\QQ$ for the $p$-adic absolute value, and $\ZZ_p$ is the closed unit ball inside of $\QQ_p$.\ It can be equivalently realized as the completion of $\ZZ$ with respect to the $p$-adic absolute value.\ Elements in $\QQ_p$ can be expressed as formal Laurent series in powers of $p$, with coefficients in some set of coset representatives for $\ZZ_p/p\ZZ_p \cong \ZZ/p\ZZ$, typically taken to be $0$, $1$, $\ldots$, $p-1$. Using the Laurent series representation, arithmetic is performed ``with carries'' in $\QQ_p$, so that the resulting arithmetic structure differs from that in a formal power series ring.

\medskip
There is a well-developed theory of ultrametric analysis that contains many well-known results from classical analysis, as well as suprising new phenomenon (loc.\ cit.).\ For us, one of the most useful features of an ultrametric is that a series
\[
\sum_{n\geq 0}a_n
\]
in an ultrametric ring converges if, and only if, 
$\lim a_n = 0$.

\medskip
We will also need to make use of normed modules over $R=\QQ_p$ and $R=\ZZ_p$. These are $R$-modules endowed with a compatible absolute value that also satisfies the ultrametric inequality. When the underlying module is complete with respect to the ultrametric, we shall call the module a $p$-adic Banach space. The Cauchy sequence construction for completions works as usual, so that one can complete normed modules over $R$ to obtain Banach spaces.

\medskip
Let $V$ be a normed space over $\QQ_p$.\ In our applications $V$ will generally be infinite dimensional, and so not all linear maps $T\colon V\to V$ need be bounded.\ Recall that  $T$  is bounded if the following supremum is finite:
\[
\sup_{v \in V\setminus \{0\}} \frac{\abs{T(v)}}{\abs{v}} < \infty.
\]
In this case we define $\abs{T}$ to be the value of this supremum, and call $T$ a \emph{bounded} linear operator on $V$.\ It is well-known by the general theory of normed linear spaces that \textit{boundedness is equivalent to continuity} in this setting.\ Let $B(V)$ be the space of bounded linear operators on $V$.\ 
$B(V)$ is itself a normed space with respect to the sup norm defined above, and if $V$ is a Banach space then so is $B(V)$.

\section{$p$-adic modular forms}\label{Spadic}
In this section we recall the rudiments of the $p$-adic theory of modular forms of tame level one, as introduced in
\cite{S1}.

\medskip
Recall that the ring of (rational) modular forms of level one is a graded ring $M \df \QQ[Q,R]$ where $Q$ and $R$ are generators of weights $4$ and $6$, respectively. These Eisenstein series can be described by their well-known Fourier expansions
\begin{align*}
Q &= 1 + 240\sum_{n\geq 1} \sigma_3(n)q^n,\\
R &= 1 -504 \sum_{n\geq 1} \sigma_5(n)q^n,
\end{align*}
where as usual $\sigma_k(n) = \sum_{d \mid n } d^k$ denotes the $k$th power divisor sum function, and $q=e^{2\pi i \tau}$. A slightly larger ring of (rational) quasimodular forms $M'=\QQ[P,Q,R]$, where
\[
P = 1-24\sum_{n\geq 1} \sigma_1(n)q^n
\]
will also play a r\^{o}le below in our discussion of the Heisenberg algebra.

\medskip
While elements of $M$ and $M'$ define holomorphic functions on the upper half plane, from Serre's perspective it is their Fourier coefficients --- and congruences among them --- that give rise to the $p$-adic theory.\
Observe that if $f \in M'$, then there exist an integer $N$ such that $Nf$ has an integral Fourier expansions. It follows that if $f$ has a Fourier expansion $f = \sum_{n\geq 0} a_nq^n$ for $a_n \in \QQ$, then for every prime, the sequence of $p$-adic absolute values $(\abs{a_n})_{n\geq 0}$ is bounded from above. Therefore, we can define a lift of the $p$-adic norm on $\QQ$ to $M'$, called the \emph{sup-norm}, as follows:
\[
 \abs{f} \df \sup_{n\geq 0} \abs{a_n}.
\]
\begin{rmk}
    Since we are working with a \emph{discrete} valuation, the supremum above could be replaced by a maximum, but if one wants to work for example over an algebraically closed field, then one should use the supremum as above.
\end{rmk}

The spaces $M$ and $M'$ are not complete with respect to this norm. To illustrate this point for $M$, recall that this ring contains all of the Eisenstein series
\[
E_k(q) = 1-\frac{2k}{B_k}\sum_{n\geq 1} \sigma_{k-1}(n)q^n,
\]
of weights $k\geq 4$, where $B_k$ is a Bernoulli number.\ Kummer's congruences for the Bernoulli numbers (see \cite{S1} for a statement of these congruences) have the following interpretation: let $(k_i)_{i\geq 0}$ be a sequence of integers that go to infinity in the Archimedean topology, but such that
\[
  k_{i+1} \equiv k_i \pmod{(p-1)p^{i}},
\]
and where $k_i \not \equiv 0 \pmod{p-1}$ for all $i$. Then the $p$-adic limit $k = \lim_{i}k_i$ exists, and the sequence $(E_{k_i})_{i\geq 0}$ converges with respect to the sup-norm to the following $p$-adic Eisenstein series:
\[
E_k^*(q) = 1-\frac{2k}{B_k^*}\sum_{n\geq 1} \sigma_{k-1}^*(n)q^n,
\]
where
\[
\sigma_{k-1}^*(n) = \sum_{\substack{d\mid n\\ p\nmid d}} d^{k-1}
\]
and $B_k^*$ is related to a special value of the $p$-adic Riemann zeta function in the same way that $B_k$ is related to the special values of the complex Riemann zeta function (see e.g. \cite{S1} for the precise relationship). 

\medskip
In this way, the Kummer congruences give examples of sequences in $M$ that are convergent, but such that the limit is not contained in $M$. For example, if the limit of the weights $k$ is a positive even integer, then the series $E_k^*$ is an Eisenstein series of weight $k$ on $\Gamma_0(p)$.

\medskip
In light of this, Serre introduces the completion $M_p$ of $M$ with respect to the sup-norm, and refers to it as the space of $p$-adic modular forms.\ His motivation was to give a new construction of the $p$-adic Riemann zeta function, and more generally Kubota-Leopoldt $p$-adic $L$-functions, by interpolating Eisenstein series as above.\ While not obvious, he also showed that $P \in M_p$ and thus $M'\subseteq M_p$.\ (Actually, Serre showed this for odd primes, and Katz explained how to deduce this when $p=2.$)

\medskip
Serre also introduced the space $X$ of $p$-adic weights as the space of continuous characters of $\ZZ_p^\times$.\ By taking a limit of $(\ZZ/p^r\ZZ)^\times$, one finds that \[\ZZ_p^\times \cong (1+p\ZZ_p) \times \ZZ/(p-1)\ZZ,\]
and thus one has an identification
\[
X\cong \begin{cases}
    \ZZ_p\times \ZZ/(p-1)\ZZ & p \textrm{ is odd},\\
    \ZZ_2 & p=2.
\end{cases}
\]
The integer weights are the diagonally embedded copy of $\ZZ$ in $X$, corresponding to the characters $z \mapsto z^k$.\ Note that $X$ is isomorphic to the profinite group $\varprojlim_r (\ZZ/p^r\ZZ)^{\times}$.\ As in the complex theory, there is a notion of even weights $2X$ and odd weights $1 +2X$.

\medskip
Serre showed that if $(f_i)_{i\geq 0}$ is a sequence of modular forms with weights $k_i$ satisfying congruences as above, then they have a limit $k$ in weight space.\ If the sequence $(f_i)$ converges to a $p$-adic modular form $f \in M_p$, then the $p$-adic weight of $f$ is defined to be $k$. 

\medskip
Spaces of $p$-adic modular forms have been important in recent decades for studying congruences among Fourier coefficients of modular forms, giving rise to constructions of new types of $L$-functions.\ They have also been critical for studying $p$-adic properties of Hecke algebras acting on spaces of modular forms.\ Many classical forms are $p$-adic modular forms.\ For example, Serre showed that every form on $\Gamma_0(p)$ is an element of $M_p$.\ He also observed that certain meromorphic forms, such as $j^{-1}$ when $p=2$, are (holomorphic) $p$-adic modular forms.

\section{$p$-adic vertex algebras}\label{SpVA}
 Throughout this Section $V$ is a $p$-adic Banach space.\ This will serve as our  state-space.

\subsection{$p$-adic fields on $V$}\label{SSpfield}
 Recall that a linear operator on $V$ is \textit{continuous} if, and only if, it is \textit{bounded}.\
Analogously to (\ref{dist}) we will be working with formal power series 
\begin{eqnarray*}
a(z)=\sum_{n\in\mathbf{Z}} a(n)z^{-n-1}\in End_{\mathbf{Q}_p}(V)[[z, z^{-1}]].
\end{eqnarray*}

The basic definition is as follows.\  We consider  the following two conditions, which should hold for all states $b\in V$:
\begin{eqnarray}\label{pfddef}
&&\ (i)\ \mbox{there is}\ M\in\mathbf{R}_+\ \mbox{(depending on $a(z)$)\ such that}\ |a(n)b|\leq M|b|\ \mbox{for all $n$},\\
&&(ii)\ \lim_{n\rightarrow\infty} a(n)b=0\notag
\end{eqnarray}

We say that $a(z)$ is a $p$-adic field if it satisfies (\ref{pfddef})(i), (ii).\ Set
\begin{eqnarray*}
\mathcal{F}(V):=\{p\mbox{-adic fields $a(z)$ on $V$}\}.
\end{eqnarray*}

We make several comments regarding (\ref{pfddef}).\ Part (i) is a minor modification of the corresponding definition in \cite{FM1}.\ It is 
equivalent to the statement  that the operator norms $|a(n)|$ are uniformly bounded by $M$.\ In particular
each $a(n)\in B(V)$.\ Part (ii) is the $p$-adic analog of the property $a(n)b=0\ (n\gg0)$ for algebraic vertex operators.\
It says that $\sum_{n\geq 0} a(n)b$ converges.\
 The next result is basic, cf.\  \cite{FM1}, Proposition 4.6.\

\begin{thm}\label{thmFVB} $\mathcal{F}(V)$ is a $p$-adic Banach space when equipped with the sup norm, i.e., $|a(z)|:=\sup_n|a(n)|$. 
\end{thm}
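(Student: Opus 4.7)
The plan is to verify first that $|{\cdot}|$ is a well-defined ultrametric norm on $\mathcal{F}(V)$, and then that $\mathcal{F}(V)$ is complete.

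For well-definedness, I would use condition (i) of (\ref{pfddef}): the existence of a uniform bound $|a(n)b|\leq M|b|$ for all $n,b$ is equivalent to $|a(n)|\leq M$ in $B(V)$, so the supremum $|a(z)|=\sup_n|a(n)|$ is finite. The norm axioms (homogeneity, ultrametric triangle inequality, and non-degeneracy) then transfer directly from the operator norm on $B(V)$: for example, $|a(z)+b(z)|=\sup_n|a(n)+b(n)|\leq\sup_n\max(|a(n)|,|b(n)|)=\max(|a(z)|,|b(z)|)$, and $|a(z)|=0$ iff every $a(n)$ vanishes.

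The main content is completeness. Given a Cauchy sequence $(a_k(z))_{k\geq 0}$ in $\mathcal{F}(V)$, the bound $|a_k(n)-a_\ell(n)|\leq|a_k(z)-a_\ell(z)|$ shows that for every fixed $n$, the sequence of modes $(a_k(n))_k$ is Cauchy in $B(V)$. Since $B(V)$ is a Banach space, these modes converge to some operator $a(n)\in B(V)$, and I set $a(z):=\sum_{n}a(n)z^{-n-1}$. To show $a_k(z)\to a(z)$ in sup norm, I would fix $\veps>0$, choose $N$ so that $k,\ell\geq N$ implies $\sup_n|a_k(n)-a_\ell(n)|<\veps$, and then pass to the limit in $\ell$ to get $|a_k(n)-a(n)|\leq\veps$ uniformly in $n$ for $k\geq N$.

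The step I expect to be the main obstacle is verifying that the candidate limit $a(z)$ lies in $\mathcal{F}(V)$, specifically that it satisfies axiom (ii) of (\ref{pfddef}). Condition (i) is straightforward from the ultrametric inequality $|a(n)|\leq\max(|a(n)-a_k(n)|,|a_k(n)|)$ together with the uniform bound on $|a_k|$. For (ii), I would exploit the ultrametric crucially: given $b\in V$ with $b\neq 0$ and $\veps>0$, pick $k$ large enough that $|a(z)-a_k(z)|<\veps/|b|$, so that $|a(n)b-a_k(n)b|<\veps$ uniformly in $n$. Since $a_k(z)\in\mathcal{F}(V)$, we have $a_k(n)b\to 0$, hence $|a_k(n)b|<\veps$ for $n\gg 0$. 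The ultrametric inequality then yields $|a(n)b|\leq\max(|a(n)b-a_k(n)b|,|a_k(n)b|)<\veps$ for all sufficiently large $n$, establishing $\lim_{n\to\infty}a(n)b=0$. This is the characteristic non-Archimedean trick — uniform operator approximation combined with the strong triangle inequality — and is where completeness of $\mathcal{F}(V)$ really uses the ultrametric hypothesis on $V$.
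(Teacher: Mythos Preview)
Your proposal is correct and follows essentially the same route as the paper: both arguments show modewise Cauchy in $B(V)$, define the limit series, verify sup-norm convergence, and then check axioms (i) and (ii) of (\ref{pfddef}) via the ultrametric inequality, with the key $\epsilon/|b|$ trick for (ii) appearing in both. You are in fact slightly more careful than the paper in explicitly working in $B(V)$ rather than $End(V)$ and in spelling out the norm axioms, but there is no substantive difference in strategy.
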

\begin{proof} It is easy to see that $\mathcal{F}(V)$ is a linear subspace of $End(V)[[z, z^{-1}]]$.\ We will show that it is closed.\
Let $(a_i(z))$ be a Cauchy sequence in $\mathcal{F}(V)$.\ Then $\forall \epsilon>0,\ \exists N>0$ such that for all  $i, j>N$ and all $n$ the inequality
$|a_i(n)-a_j(n)|<\epsilon$ holds.\ In particular $|a_i(n)|<\epsilon$.\ This says that for each $n$, 
$(a_i(n))$ is a Cauchy sequence in $End(V)$.\ It therefore has a limit, call it $a(n)\in End(V)$.

\medskip
Let $a(z):=\sum_n a(n)z^{-n-1}$.\ We have $|a_i(z)-a(z)|=\sup_n |a_i(n)-a(n)|=|(a_i(n)-a_j(n))+(a_j(n)-a(n))|<\epsilon$ for large enough $j$.\
Thus $a_i(z)\rightarrow a(z)$.\ It remains to show that $a(z)\in\mathcal{F}(V)$.\ For all $n$ and all $b$ we have $|a(n)b|=|(a(n)b-a_i(n)b)+a_i(n)b|\leq\sup\{|a(n)b-a_i(n)b|, |a_i(n)b| \}
=\sup\{|a(n)-a_i(n)|, M_i \}|b|\leq (\sup\{\epsilon, M_i\})|b|$ where we choose $i$ large enough so that $|a(n)-a_i(n)|<\epsilon$ and where $|a_i(n)b|\leq M_i|b|$
for all $n$ and all $b$.\ This proves (\ref{pfddef})(i).\ Similarly, to prove (\ref{pfddef})(ii), fix $b\neq0$ in $V$ and any $\epsilon>0$.\ Then $|a(n)-a_i(n)|<\epsilon/|b|$ for all 
$n$ and all large enough
$i$.\ Then $|a(n)b|=|(a(n)-a_i(n))b+a_i(n)b|\leq\sup\{\epsilon, |a_i(n)b|\}$.\ But $|a_i(n)b|<\epsilon$ for large enough $n$.\ Thus for large enough
$n$ we have $|a(n)b)<\epsilon$, as required.
\end{proof}

\subsection{$p$-adic $Y$-maps} A $p$-adic $Y$-map is a \textit{continuous} linear map
$$Y: V\rightarrow \mathcal{F}(V), a\mapsto Y(a, z)=\sum_n a(n)z^{-n-1}.$$
This makes sense after Theorem \ref{thmFVB}.\ Naturally, we call $Y(a, z)$ the $p$-adic field or $p$-adic vertex operator associated to $a$.\
The map $Y$ itself is the $p$-adic state-field correspondence.

\medskip
When we have such a $Y$-map it will be convenient to adapt the notation of (\ref{pfddef})(i) as follows:
\begin{eqnarray}\label{pfddef1}
|a(n)b|\leq|a||b|\ \mbox{for all $n$}.
\end{eqnarray}
This amounts to replacing $M$ in (\ref{pfddef})(i) by $|a|$.\ We shall always assume this for our $p$-adic vertex operators, although it is a slight departure from \cite{FM1}, \cite{FM2} where we used the condition $|a(n)b|\leq M|a||b|$ for the sake of generality because this was all that was required in proofs.\ 
(\ref{pfddef1}) is also consistent with Banach algebra theory as explained in the Example at the end of the next Subsection.

\subsection{$p$-adic vertex algebras} \label{SSpVA} A $p$-adic  vertex algebra is a triple $(V, Y, \mathbf{1})$ satisfying the following axioms:
\begin{eqnarray*}
&& (a)\ (\mbox{$p$-adic state-field correspondence})\ Y: V\rightarrow \mathcal{F}(V)\ \mbox{is a $p$-adic $Y$-map},\\
&& (b)\ (\mbox{Creativity})\ Y(a, z)\mathbf{1} = a+ O(z), \\
&& (c)\  |\mathbf{1}|\leq 1, \\
&& (d)\ (\mbox{$p$-adic Jacobi identity) for all}\ a, b, c\in V\ \mbox{and}\  r, s, t\in \mathbf{Z}\ \mbox{we have}\\
&&\ \ \ \ \ \ \ \ \ \ \ \ \ \ \sum_{i=0}^{\infty} {r\choose i}(a(t+i)b)(r+s-i)c = \\
&&\sum_{i=0}^{\infty} (-1)^i{t\choose i}\left\{a(r+t-i)b(s+i)c- b(s+t-i)a(r+i)c  \right\}.
\end{eqnarray*}

The examples we know all satisfy the condition $|\mathbf{1}|=1$.

\medskip
Let us consider the $p$-adic Jacobi identity more carefully.\ It is identical to the standard algebraic Jacobi identity presented in Subsection \ref{SSVA} but the meanings diverge.\
In the $p$-adic setting above the sums are not necessarily finite as they are in the algebraic setting.\ It is proved in \cite{FM1}, Lemma 4.9 that each of the three sums in the
$p$-adic Jacobi identity converge $p$-adically so that the identity itself makes sense.

\medskip\noindent
Example.\ Banach algebras.\
Let $V$ be a commutative $p$-adic Banach algebra.\
By definition,  multiplication in $V$ satisfies $|ab|\leq|a||b|$.\ Thus multiplication by $a$ is continuous and $V$ becomes a $p$-adic vertex algebra,
albeit of a rather trivial type, for which $Y(a, z):=a(-1)$ is the constant series consisting of the multiplication by $a$ map (i.e.,
$a(-1)b:=ab$).\ We leave it to the reader to contemplate the relation between the Jacobi identity and the commutativity of $V$
(cf.\ \cite{FM1}, Section 5).


\subsection{The $p$-adic state-field correspondence} In this Subsection we look more closely at a $p$-adic vertex algebra
$(V, Y, \mathbf{1})$ and especially the $Y$-map.\ The next Theorem clarifies, extends, and summarizes arguments in \cite{FM1}, Proposition 4.14 and Section 5.
\begin{thm} Suppose that $(V, Y, \mathbf{1})$ is a $p$-adic vertex algebra and let $W\subseteq \mathcal{F}(V)$ denote the image of $Y$.\ Then the following hold:
\begin{eqnarray*}
&&(a)\  W\ \mbox{is a closed subspace of}\ \mathcal{F}(V), \\
&&(b)\ W\ \mbox{is a $p$-adic vertex algebra with respect to residue products of fields}, \\
&&(c)\ Y: V\stackrel{\cong}{\longrightarrow} W\ \mbox{is a topological isomorphism of $p$-adic vertex algebras}.
\end{eqnarray*}
\end{thm}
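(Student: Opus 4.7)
The proof hinges on a single structural fact: creativity supplies a continuous one-sided inverse $\varphi : \mathcal{F}(V) \to V$, $F \mapsto F(-1)\mathbf{1}$, to $Y$. Indeed, $|F(-1)\mathbf{1}| \leq |F(-1)||\mathbf{1}| \leq |F|$ by the sup norm on $\mathcal{F}(V)$ (Theorem \ref{thmFVB}) and the condition $|\mathbf{1}|\leq 1$, so $\varphi$ is bounded; and creativity gives $\varphi \circ Y = \mathrm{id}_V$. This immediately settles (a): if $Y(a_m) \to F$ in $\mathcal{F}(V)$, then $a_m = \varphi(Y(a_m)) \to \varphi(F) =: a$, and continuity of $Y$ yields $Y(a) = \lim_m Y(a_m) = F$, so $F \in W$ and $W$ is closed.

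For (b), the plan is first to equip $\mathcal{F}(V)$ with $n$-th residue products in the standard way. Namely, for $A, B \in \mathcal{F}(V)$ and $n, k \in \mathbf{Z}$, set
\[
(A_n B)(k) := \sum_{i\geq 0}(-1)^i \binom{n}{i}\bigl\{A(n-i)B(k+i) - (-1)^n B(n+k-i)A(i)\bigr\}.
\]
Each sum converges when applied to any $c \in V$: for the first summand $B(k+i)c \to 0$ as $i \to \infty$ by (\ref{pfddef})(ii), and for the second $A(i)c \to 0$, with terms dominated by $|A||B||c|$. Verifying that the resulting $(A_n B)(z)$ lies in $\mathcal{F}(V)$ is the analytic heart of the matter; the bounded-mode condition (\ref{pfddef})(i) is immediate from the ultrametric inequality and (\ref{pfddef1}), while the decay condition (\ref{pfddef})(ii) is the main obstacle discussed below. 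Granting this, the $p$-adic Jacobi identity on $V$ with $r=0$ collapses the left side to a single term and yields
\[
(a(t)b)(s)c = \sum_{i\geq 0}(-1)^i\binom{t}{i}\bigl\{a(t-i)b(s+i)c - b(s+t-i)a(i)c\bigr\},
\]
which is exactly $Y(a(t)b, z) = Y(a,z)_t Y(b,z)$. Hence $W$ is closed under residue products, and $Y$ intertwines them with the modes of $V$. Declaring the vacuum of $W$ to be $Y(\mathbf{1}, z) = \mathrm{Id}_V$ (a consequence of creativity, noted in Subsection \ref{SSVA}) of norm $1$, the $p$-adic Jacobi identity on $W$ is inherited from that on $V$ by applying $\varphi$.

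Part (c) is then formal: $Y$ is continuous by the state-field correspondence axiom, and $\varphi|_W : W \to V$ is its continuous inverse by (a), so $Y$ is a topological linear isomorphism; the intertwining established in (b) promotes it to an isomorphism of $p$-adic vertex algebras.

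The main obstacle is verifying $\lim_k (A_n B)(k)c = 0$ for $A, B \in \mathcal{F}(V)$. The proof splits the defining sum at a large cutoff $I$: the finite head $i < I$ tends to zero as $k \to \infty$ because $B$ is a field and, for each fixed $i$, $B(k+i)c \to 0$ and $B(n+k-i)(A(i)c) \to 0$; the infinite tail $i \geq I$ is uniformly small by boundedness of $A, B$ together with the decay $A(i)c \to 0$ and $B(k+i)c \to 0$. Ultrametric estimates combine these into a single vanishing bound. This technical analysis is carried out in \cite{FM1}.
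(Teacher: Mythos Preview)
Your proof is correct. For part (a) it is essentially the paper's argument repackaged: the paper derives the inequality $|a|\leq |Y(a,z)|$ directly from creativity and uses it to show that a Cauchy sequence $(Y(a_i,z))$ in $W$ forces $(a_i)$ to be Cauchy in $V$, whereas you obtain the same inequality by naming the continuous left inverse $\varphi(F)=F(-1)\mathbf{1}$ explicitly. For part (b) the paper simply defers to \cite{FM1}, Section 5, while you sketch the residue-product construction and the key $r=0$ specialization of the Jacobi identity; this is consistent with what is done there.

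The genuine divergence is in part (c). The paper establishes that $Y\colon V\to W$ is a topological isomorphism by invoking Banach's open mapping theorem for the continuous bijection of $p$-adic Banach spaces (noting via a footnote that this theorem carries over to the nonarchimedean setting, cf.\ \cite{BGR}). Your route is more elementary: since $\varphi|_W$ is an explicit bounded inverse to $Y$, continuity of $Y^{-1}$ is immediate and no open mapping theorem is required. This buys you a concrete description of the inverse and removes the dependence on a nontrivial piece of functional analysis; the paper's approach, on the other hand, would still go through under weaker hypotheses where an explicit bounded retraction is not available.
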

\begin{proof} First one easily shows as a consequence of the creativity axiom that $|a|\leq |Y(a, z)|$.\ To prove (a),
suppose that $(Y(a_i, z))$ is a Cauchy sequence in $W$.\ From the previous inequality one deduces that $(a_i)$ is a Cauchy sequence in $V$.\
Therefore it has a limit, say $a\in V$, and by the continuity of $Y$ we find that $\lim_i Y(a_i, z)=Y(a, z)\in W$.\ This shows that $W$ is complete, whence it is closed
by a general property of metric spaces,
and (a) is proved.\ We skip any discussion of part (b) and refer the reader instead to \cite{FM1}, Section 5, where the definition and properties of  residue products
of $p$-adic fields are explained, including a proof that (\ref{pfddef1}) holds in $W$.

\medskip
Finally, since $Y: V\longrightarrow W$ is a continuous surjection of $p$-adic Banach spaces it is an open map by Banach's open mapping theorem\footnote{We thank Fernando Pe\~{n}a V\'{a}zquez for pointing this out (personal communication).}  which continues to hold in the $p$-adic setting \cite{BGR}.\
$Y$ is injective thanks to the creativity property, so $Y$ is bijective and therefore a topological isomorphism and part (c) is proved.
\end{proof}

\section{$p$-adic vertex operator algebras}\label{SpVOA}
So far we have limited our discussion to vertex algebras and their $p$-adic variants.\ In this Section we consider analogs of the additional axioms (a)-(d) listed in Subsection
\ref{SSVOA} that define a vertex operator algebra.\
We begin by stating our axioms.
\begin{dfn}\label{defpVOA} A $p$-adic vertex operator algebra is a quadruple $(V, Y, \mathbf{1}, \omega)$ satisfying the following.
\begin{eqnarray*}
&&(i)\ (V, Y, \mathbf{1})\ \mbox{is a $p$-adic vertex algebra},\\
&&(ii)\  \omega\in V\ \mbox{has vertex operator}\  Y(\omega, z)=\sum_n L(n)z^{-n-2}\ \mbox{with modes that satisfy}\\
&&\mbox{relations (a), (b) of Subsection \ref{SSVOA}},\\
&&(iii)\ \mbox{If $U$ is the $\mathbf{Q}_p$-span of the eigenstates for $L(0)$ belonging to eigenvalues}\\
&&\mbox{in $\mathbf{Z}$ then $U=\oplus U_{\ell}$
satisfies (c) and (d) of Subsection \ref{SSVOA}},\\
&&(iv)\ \mbox{$U$ is dense in $V$.}
\end{eqnarray*}
\end{dfn}

This amounts to a sharpening of the definition in \cite{FM1}.\ We make some additional comments.\ First, if the Virasoro modes close on the Virasoro relations with central charge $c$, then naturally we say that $V$ has central charge $c$.\ 

\medskip
Next, we have seen that all modes of all states in $V$ are bounded, hence continuous operators on
$V$ and so we can ask for a description of their spectra.\ This may vary considerably and in particular there may be an \textit{empty} point spectrum, i.e., no nonzero eigenstates, or there may  be nonintegral eigenvalues.\  Axiom (iii)
puts nontrivial restrictions on the point spectrum 
of $L(0)$.

\medskip
In the following let $V=(V,Y, \mathbf{1}, \omega)$ be a $p$-adic vertex operator algebra
and let $U$ be as above.\ The next Theorem is a good example of transfering standard VOA arguments to the $p$-adic setting.
\begin{thm}\label{thmUdense} $U=(U, Y, \mathbf{1}, \omega)$ is an algebraic vertex operator subalgebra of $V$ of central charge $c$.
\end{thm}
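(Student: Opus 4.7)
The plan is to verify in turn the four ingredients of the algebraic VOA definition from Subsection~\ref{SSVOA}: that $U$ carries a vertex algebra structure with vacuum $\mathbf{1}$, that $\omega \in U$ satisfies the Virasoro and translation covariance relations, and that the $L(0)$-grading on $U$ is free of finite rank in each degree and bounded below. The last of these is built directly into axiom (iii) of Definition~\ref{defpVOA}, so the real content lies in the first two.

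The key step is to show that $U$ is closed under every mode $a(n)$ for $a \in U$. For this I would specialize the $p$-adic Jacobi identity (axiom (d) of Subsection~\ref{SSpVA}) to $a=\omega$, $t=0$, and $r=1$. Since $\binom{1}{i}$ vanishes for $i \geq 2$, the left-hand sum collapses to two terms, and combining with translation covariance (which gives $(L(-1)b)(s+1) = -(s+1)b(s)$) yields the familiar weight-grading relation
\[
  [L(0), b(s)] = (L(0)b)(s) - (s+1)b(s).
\]
Applied to homogeneous $b \in U_m$ and $c \in U_\ell$ this shows $L(0) \cdot b(s)c = (m + \ell - s - 1) b(s)c$; since $m + \ell - s - 1 \in \mathbf{Z}$, we conclude $b(s)c \in U$, and by linearity $U$ is closed under all modes. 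The same identity gives a cutoff for nonzero modes: using the lower bound $\ell_0$ from axiom (iii) with $U_\ell = 0$ for $\ell < \ell_0$, one has $b(s)c = 0$ as soon as $s > m + \ell - \ell_0 - 1$. Hence $Y|_U$ lands in the algebraic fields $\mathcal{F}_{\text{alg}}(U)$, and the infinite $p$-adic sums appearing in axiom (d) restrict term-by-term to genuinely finite sums on $U$ --- that is, to the classical Jacobi identity.

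The remaining axioms are then routine. The vacuum state $\mathbf{1}$ lies in $U_0$ since $L(0)\mathbf{1} = 0$, and the identity $Y(\mathbf{1}, z) = \text{id}_V$ restricts to $\text{id}_U$; creativity for $U$ is inherited from $V$. The Virasoro state $\omega$ lies in $U_2$, the Virasoro brackets and translation covariance hold in $U$ because they hold in $V$, and the grading conditions (c), (d) of Subsection~\ref{SSVOA} are exactly what axiom (iii) supplies. The central charge is the same constant $c$ that governs the Virasoro modes on all of $V$.

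The main obstacle, at least conceptually, is the opening step: in the $p$-adic setting all modes are a priori only continuous operators on an infinite-dimensional Banach space, with no built-in integer weight grading, and the Jacobi sums that would express commutators like $[L(0), b(s)]$ are a priori infinite. The point is that once one writes the Jacobi identity with $r=1$, $t=0$, the sum collapses to two terms for purely combinatorial reasons, after which the classical proof of weight-homogeneity of modes transfers verbatim. With this weight-homogeneity in hand, everything else --- closure under modes, truncation of fields on $U$, and reduction of the $p$-adic Jacobi identity to its classical form --- follows straightforwardly.
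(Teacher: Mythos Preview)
Your proposal is correct and follows essentially the same route as the paper's proof: show $\mathbf{1}\in U_0$ and $\omega\in U_2$, establish that $a(n)b\in U_{\ell+m-n-1}$ for homogeneous $a,b$, use the lower bound on the grading to deduce that the restricted fields are algebraic, and observe that the $p$-adic Jacobi identity then collapses to the classical one. The only difference is that the paper cites \cite{FM1}, Lemma 6.6 for the weight-homogeneity of modes, whereas you derive it explicitly by specializing the Jacobi identity; note that the identity $(L(-1)b)(s+1)=-(s+1)b(s)$ you invoke is the $L(-1)$-derivative property $Y(L(-1)b,z)=\partial_zY(b,z)$ rather than translation covariance $[L(-1),Y(b,z)]=\partial_zY(b,z)$ per se, but this holds in any ($p$-adic) vertex algebra as a consequence of creativity and the Jacobi identity, so the argument goes through.
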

\begin{proof}
To be clear, the vertex operators 
for $a\in U$ are just 
the \textit{restrictions} $\Res^V_U Y(a, z)$.

\medskip
First note that by the creativity axiom for $p$-adic
vertex operators we have $L(0)\mathbf{1}=0$.\
So $\mathbf{1}\in U_0$.\ Similarly, $L(-2)\mathbf{1}=\omega$, so that $L(0)\omega=L(0)L(-2)\mathbf{1}=([L(0), L(-2)]+L(-2)L(0))\mathbf{1}=2L(-2)\mathbf{1}=2\omega$, where we used the Virasoro relations for the penultimate inequality.\ Thus $\omega\in U_2$.

\medskip
Suppose that $a\in U_{\ell}, b\in U_m$ and consider any mode
$a(n)$ of $a$.\ We claim that
$a(n)b\in U_{\ell+m-n-1}$.
This is a standard VOA computation that still works in the $p$-adic setting, cf.\ \cite{FM1}, Lemma 6.6.\ 
In particular, this shows that $U$ is closed under all $n^{th}$ products, i.e., $Y(a, z)b\in U[[z, z^{-1}]]$.\ More is true:\ since the grading on $U$ is bounded below by hypothesis then $Y(a, z)b$ has
a \textit{finite} singular part so the restriction of
$Y(a, z)$ to $U$ is a vertex operator in the algebraic sense.

\medskip
Finally, these vertex operators satisfy the (algebraic) Jacobi identity because they are restrictions of $p$-adic vertex operators that satisfy the $p$-adic Jacobi identity.\ This completes the proof of the Theorem.
\end{proof}
\section{Construction of some $p$-adic vertex algebras}\label{Sconstruct}
   Theorem \ref{thmUdense} says that every $p$-adic vertex operator algebra is the completion of an algebraic VOA.\ In this Section we present a technique to construct $p$-adic VOAs based on the observation that, in the notation of Theorem \ref{thmUdense}, the restriction of the norm on
   $V$ to $U$ makes $U$ in to a normed space as well as an algebraic VOA.\ Typically, $U$ will not be complete with respect to this norm.\ This will only happen if $U=V$.
\subsection{Normed algebraic vertex algebras}
In this Subsection we let $(V, Y, \mathbf{1})$  be an (algebraic) vertex algebra over $\mathbf{Q}_p$.\ We also assume that $V$ carries a $p$-adic norm  
$|\bullet|$.\ In this 
situation we use the notation
$\mathcal{F}(V)'$ to denote the $p$-adic fields $a(z)$ on $V$  that satisfy (\ref{pfddef})(i), (ii) in Section 5.2.\ This definition makes sense for any  normed space $V$ and does
\textit{not} require that $V$ is complete.\ 
$\mathcal{F}(V)'$ is a normed space with the usual sup norm which may not be complete.

\medskip
A normed algebraic vertex algebra over $\mathbf{Q}_p$  is a quadruple $(V, Y, \mathbf{1}, | \bullet|)$ satisfying the following axioms:
\begin{eqnarray*}
&&\ (i)\ |\mathbf{1}|\leq 1, \\
&& (ii)\ Y(a, z)\in\mathcal{F}(V)', \\
&&\ (iii)\ Y:V\rightarrow \mathcal{F}(V)'\ \mbox{is continuous}.
\end{eqnarray*}

In \cite{FM1} we called these \textit{algebraic vertex algebras with a compatible norm}.\

\medskip
In (ii), $Y(a, z)=\sum_na(n)z^{-n-1}$ is the algebraic vertex operator attached to the state $a$ in  $(V, Y, \mathbf{1})$.\ Thus
condition (\ref{pfddef})(ii) is automatically satisfied and  the content of (ii) is that the modes $a(n)$ have norms that are uniformly bounded by $|a|$.

\medskip
We can realize a $p$-adic vertex algebra by completing
$(V, Y, \mathbf{1}, | \bullet|)$.\ More precisely
\begin{thm}\label{thmcomp} Suppose that $(V, Y, \mathbf{1}, | \bullet|)$ is a normed algebraic vertex algebra, and let $V'$ be the completion of $V$.\ Then $V'$ is the underlying Banach space
of a $p$-adic vertex algebra.
\end{thm}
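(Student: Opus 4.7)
The plan is to extend the algebraic $Y$-map to $V'$ in two stages: first extend each individual vertex operator $Y(a,z)$ for $a\in V$ to a $p$-adic field on $V'$; second, extend $Y$ itself by continuity to obtain $\hat Y\colon V'\to \mathcal{F}(V')$. For the first stage, since each mode $a(n)$ satisfies $|a(n)b|\le |a||b|$ by the definition of a normed algebraic vertex algebra, it extends by uniform continuity and density to a bounded operator $\hat a(n)\in B(V')$ with the same norm bound. Put $\hat Y(a,z):=\sum_n \hat a(n)z^{-n-1}$. To see that $\hat Y(a,z)\in \mathcal{F}(V')$, condition (\ref{pfddef})(i) is inherited; for (\ref{pfddef})(ii), given $b\in V'$ and $\epsilon>0$, pick $b_0\in V$ with $|a||b-b_0|<\epsilon$ and use $a(n)b_0=0$ for $n\gg 0$ to conclude $|\hat a(n)b|=|\hat a(n)(b-b_0)|\le |a||b-b_0|<\epsilon$. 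Moreover $|\hat a(n)|=|a(n)|$ by density of $V$ in $V'$, so $Y(a,z)\mapsto \hat Y(a,z)$ is an isometric embedding $\mathcal{F}(V)'\hookrightarrow \mathcal{F}(V')$.

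For the second stage, composing $Y\colon V\to \mathcal{F}(V)'$ with this embedding yields a continuous linear map $V\to \mathcal{F}(V')$, which extends uniquely to a continuous $\hat Y\colon V'\to \mathcal{F}(V')$ by the universal property of completion together with the completeness of $\mathcal{F}(V')$ from Theorem \ref{thmFVB}. This verifies axiom (a) of Subsection \ref{SSpVA}. Axiom (c), $|\mathbf{1}|\le 1$, is inherited from $V$. For creativity (b), choose $a_i\in V$ with $a_i\to a\in V'$; the relation $\hat Y(a_i,z)\mathbf{1}=a_i+O(z)$ holds in $V$, and for each fixed $n$ the map $u(z)\mapsto u(n)\mathbf{1}$ is continuous $\mathcal{F}(V')\to V'$, so taking $i\to\infty$ gives $\hat Y(a,z)\mathbf{1}=a+O(z)$.

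The principal obstacle is the $p$-adic Jacobi identity (d), since on $V$ it is an equation of \emph{finite} sums while on $V'$ the sums are genuinely infinite. My strategy is approximation: for $a,b,c\in V'$ choose $a_i,b_i,c_i\in V$ converging to them, apply the algebraic Jacobi identity for $(a_i,b_i,c_i)$ (which is a finite-sum identity, hence a degenerate instance of the $p$-adic Jacobi identity), and pass to the limit in $i$. The key uniform estimate, valid for every summation index $u$, is
\[
\bigl|(a(t+u)b)(r+s-u)c - (a_i(t+u)b_i)(r+s-u)c_i\bigr|\le \max\bigl(|a||b||c-c_i|,\ |a||b-b_i||c_i|,\ |a-a_i||b_i||c_i|\bigr),
\]
obtained by two applications of the ultrametric inequality together with $|a(n)b|\le|a||b|$; an analogous bound handles each term on the right-hand side of Jacobi. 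Since $\binom{r}{u},\binom{t}{u}\in \ZZ$ have $p$-adic absolute value at most $1$, the ultrametric inequality then bounds the discrepancy between the entire sums on the two sides by the same vanishing quantity, yielding the identity for $(a,b,c)$ by passage to the limit. The delicate point — and the reason the compatibility hypothesis $|a(n)b|\le |a||b|$ is indispensable — is precisely this \emph{uniformity in $u$}: without it one cannot simultaneously control the infinite tail of every sum, and the approximation argument collapses.
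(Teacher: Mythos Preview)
Your proof is correct and follows essentially the same approach as the paper's, which merely sketches the idea of defining $Y'(a,z):=\lim_i Y(a_i,z)$ and refers to \cite{FM1}, Proposition 7.2 for the verification of the axioms. Your two-stage extension (first extend each mode to $B(V')$, then extend $Y$ by the universal property of completion using Theorem~\ref{thmFVB}) and your approximation argument for the $p$-adic Jacobi identity via the uniform-in-$u$ estimate are exactly the details that the paper defers to \cite{FM1}; the only small omission is that you do not explicitly note the convergence of the three infinite sums in the Jacobi identity (needed to justify $|\sum_u x_u|\le\sup_u|x_u|$), but this follows immediately from condition~(\ref{pfddef})(ii) together with $|a(n)b|\le|a||b|$, as in \cite{FM1}, Lemma~4.9.
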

\begin{proof}The general idea should be evident.\ For a Cauchy sequence $(a_i)$ in $V$ let $a:=\lim_i a_i\in V'$.\ Since $Y$ is continuous then
$Y'(a, z):=\lim_i Y(a_i, z)$.\ We must show that $Y'(a, z)\in\mathcal{F}(V')$ so that $Y'$ is the state-field correspondence for $V'$.\ And we must verify the axioms of Subsection \ref{SSpVA}.\
For details, cf.\ \cite{FM1}, Proposition 7.2.
\end{proof}

This argument shows that the algebraic vertex operators $Y(a_i, z)$ extend by continuity to $V'$ and  $Y(a_i, z)=\mbox{Res}_V Y'(a_i, z)$.\ So we can think of $V$ as a dense vertex subalgebra of $V'$.

\medskip
One can achieve the hypotheses of Theorem \ref{thmcomp} as follows:\ Let $(V, Y, \mathbf{1}, \omega)$ be an algebraic VOA over $\mathbf{Q}_p$.\ Recall from 
Subsection \ref{SSVOA} that $V=\oplus_{\ell\in\mathbf{Z}} V_{\ell}$ has a direct sum decomposition defined by the eigenspaces of $L(0)$.\
By a \textit{Virasoro compatible $\mathbf{Z}_p$-lattice} of $V$ we mean  a $\mathbf{Z}_p$-submodule $U\subseteq V$  satisfying 
\begin{eqnarray}
&&(a)\ U=\oplus_{\ell} U_{\ell}, U_{\ell}\ \mbox{is a free $\mathbf{Z}_p$-module, and}\ V_{\ell}=\mathbf{Q}_p\otimes U_{\ell},\label{Ulattice}\\
&&(b)\ U\ \mbox{is closed with respect to all products}.\notag
\end{eqnarray}
We do \textit{not} insist that $\omega\in U$.

\medskip
 The easiest way to realize this is to start with a VOA
 $(U, Y, \mathbf{1}, \omega)$ defined over $\mathbf{Z}_p$, or even $\mathbf{Z}$,  and simply extend scalars by taking $V:=\mathbf{Q}_p\otimes U$.\ With this approach 
 the Virasoro vector $\omega$ belongs to $U$ whereas this is not required (and does not always hold in practice).
 
 \medskip
 Be that is it may, in the context of (\ref{Ulattice}) choose a $\mathbf{Z}_p$-basis for each $U_{\ell}$ and let
 $\{e_j\}$ denote the  resulting basis of $U$.\ Each state $a\in V$ is a finite sum $a=\sum_j t_je_j\ (t_j\in\mathbf{Q}_p)$ and we define
 $|a|:=\sup_j |t_j|$.\ 
 
 \medskip
 We have (\cite{FM1}, Proposition 7.4):
 \begin{thm}\label{thmnorm}The norm defined above is compatible with $(V, Y, \mathbf{1})$.\ In particular the completion $V'$ of $V$ defines a $p$-adic vertex algebra. $\hfill\Box$
 \end{thm}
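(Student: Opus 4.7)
The plan is to verify directly the three axioms of a normed algebraic vertex algebra from Subsection 7.1 for the quadruple $(V, Y, \mathbf{1}, |\bullet|)$, and then invoke Theorem \ref{thmcomp} to conclude. The key structural input is that the basis $\{e_j\}$ may be chosen adapted to the grading, so that each $e_j$ is homogeneous and lies in some $U_\ell$.

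First, I would extract the central estimate from the lattice hypothesis. Since $U$ is closed under all products by (\ref{Ulattice})(b), for any basis vectors $e_j, e_k$ and any $n\in\mathbf{Z}$ the product $e_j(n)e_k$ lies in $U$; hence it is a (finite) $\mathbf{Z}_p$-linear combination of basis vectors, giving
\[
|e_j(n)e_k| \leq 1 \quad\text{for all } j,k,n.
\]
Extending to arbitrary states $a=\sum_j t_j e_j$ and $b=\sum_k s_k e_k$ (finite sums) by $\mathbf{Q}_p$-bilinearity of the modes and the ultrametric triangle inequality,
\[
|a(n)b| \leq \sup_{j,k}|t_j||s_k||e_j(n)e_k| \leq \sup_j |t_j|\cdot \sup_k|s_k| = |a|\,|b|,
\]
uniformly in $n$. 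This single inequality will drive the remaining verifications.

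Axiom (i) holds because $\mathbf{1}\in U_0$ (this is part of the standing realization, where $U$ comes from a $\mathbf{Z}_p$-VOA by scalar extension), whence $\mathbf{1}$ is a $\mathbf{Z}_p$-combination of basis vectors and $|\mathbf{1}|\leq 1$. For axiom (ii), the displayed bound gives condition (\ref{pfddef})(i) with $M = |a|$, while condition (\ref{pfddef})(ii) is trivial: because $V$ is an \emph{algebraic} vertex algebra, $a(n)b = 0$ for $n \gg 0$, so $\lim_{n\to\infty} a(n)b = 0$ automatically. Thus $Y(a,z)\in \mathcal{F}(V)'$ for every $a\in V$. For axiom (iii), taking the supremum over nonzero $b$ in the central estimate yields $|a(n)|\leq |a|$ for every $n$, so $|Y(a,z)| = \sup_n |a(n)| \leq |a|$; hence $Y:V\to \mathcal{F}(V)'$ is a bounded operator of norm $\leq 1$, in particular continuous. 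Now Theorem \ref{thmcomp} applies and produces the asserted $p$-adic vertex algebra structure on the completion $V'$.

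The only subtle point — not really an obstacle, but the place where care is needed — is the passage from ``the lattice $U$ is closed under all products'' to the clean integrality bound $|e_j(n)e_k|\leq 1$. This requires that the chosen basis is homogeneous (so the products land inside $U$ rather than merely in its $\mathbf{Q}_p$-span) and uses the fact that a free $\mathbf{Z}_p$-module has its sup-norm realized by $\mathbf{Z}_p$-coordinates in any such basis. Once this is in place, the remainder of the proof is routine ultrametric bookkeeping, and no further convergence issues arise because we are still working at the algebraic level — the Banach completion only enters through the final application of Theorem \ref{thmcomp}.
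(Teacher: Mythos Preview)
Your argument is correct and is precisely the expected one; the paper itself gives no proof here, deferring to \cite{FM1}, Proposition 7.4, and your route---lattice closure gives $|e_j(n)e_k|\leq 1$, bilinearity plus the ultrametric gives $|a(n)b|\leq |a||b|$, and the three axioms follow---is the natural argument.

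One small correction on axiom (i): your justification that $\mathbf{1}\in U_0$ because ``$U$ comes from a $\mathbf{Z}_p$-VOA by scalar extension'' misreads the setup. The paper presents scalar extension only as \emph{the easiest way} to realize a Virasoro compatible lattice, not as the general definition, and explicitly notes that $\omega$ need not lie in $U$. The conditions (\ref{Ulattice})(a),(b) as written do not by themselves force $\mathbf{1}\in U$; rather, $\mathbf{1}\in U$ should be treated as a tacit part of the hypothesis (it holds in every example, e.g.\ the Heisenberg lattice of Subsection \ref{SSU}), since without it the conclusion $|\mathbf{1}|\leq 1$ would simply fail. This does not affect the substance of your proof.
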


 We will use the following alternate description of $V'$ later (cf.\ the proof of \cite{FM1}, Proposition 7.3.)
 \begin{lem}\label{lemalt}
Let the notation be as before.\ Then
$$ V'=\left\{\sum_{\ell} v_{\ell}\mid v_{\ell}\in V_{\ell}, v_{\ell}\rightarrow 0\right\}.$$ $\hfill\Box$
\end{lem}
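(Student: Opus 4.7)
The plan is to realize the right-hand side as a $p$-adic Banach space that contains $V$ as a dense subspace carrying exactly the norm constructed in Theorem \ref{thmnorm}, and then appeal to uniqueness of completions to identify it with $V'$.

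First, let
\[
S := \left\{ \textstyle\sum_{\ell} v_{\ell} \,\middle|\, v_{\ell}\in V_{\ell},\ v_{\ell}\to 0 \text{ as } \ell\to\infty \right\}
\]
viewed as a $\mathbf{Q}_p$-subspace of $\prod_{\ell} V_{\ell}$ (note $V_{\ell}=0$ for $\ell\ll 0$, so only the direction $\ell\to+\infty$ matters). Each $V_{\ell}$ is finite dimensional and inherits a norm from the chosen $\mathbf{Z}_p$-basis of $U_{\ell}$; equip $S$ with $\|\sum_{\ell}v_{\ell}\| := \sup_{\ell} |v_{\ell}|$, a supremum that is finite (and indeed attained) by the vanishing-at-infinity hypothesis. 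The ultrametric inequality and the fact that $V_{\ell}$ is a Banach space give, by a routine termwise argument using that the projections $\pi_{\ell}\colon S\to V_{\ell}$ are norm-nonincreasing, that $S$ is itself a $p$-adic Banach space.

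Next, I would check that $V\hookrightarrow S$ isometrically. Any $a=\sum_j t_j e_j\in V$ is a \textit{finite} sum of basis vectors, so grouping basis vectors in each $U_{\ell}$ gives a finite decomposition $a=\sum_{\ell}a_{\ell}$, which lies in $S$. Because $\{e_j\}$ is obtained by concatenating $\mathbf{Z}_p$-bases of the $U_{\ell}$, the defining formula $|a|=\sup_j |t_j|$ factors as $|a|=\sup_{\ell}|a_{\ell}|$, matching the restriction of $\|\cdot\|$. Density of $V$ in $S$ is then immediate: for $w=\sum_{\ell}v_{\ell}\in S$, the truncated sums $w_N:=\sum_{\ell\le N}v_{\ell}$ lie in $V$, and $\|w-w_N\|=\sup_{\ell>N}|v_{\ell}|\to 0$ precisely because $v_{\ell}\to 0$.

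By uniqueness of completions of normed spaces, the natural map $V'\to S$ induced by the inclusion $V\hookrightarrow S$ is an isometric isomorphism of Banach spaces, which is the stated identification.

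I expect the main obstacle to be purely organizational: making sure the graded norm on $S$ is defined in a way that genuinely extends the basis-sup norm on $V$. Once one commits to the viewpoint that the Virasoro compatible $\mathbf{Z}_p$-lattice makes the summands $V_{\ell}$ \emph{norm-orthogonal} (in the ultrametric sense that the norm of a finite sum equals the maximum of the norms of its components), the rest of the argument is the standard verification that $\ell^{\,0}$-type spaces are completions of their finitely supported subspaces.
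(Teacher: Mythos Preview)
Your argument is correct and is the standard $c_0$-type identification one would expect: build the candidate Banach space $S$ of null graded sequences with the sup norm, check that the basis-sup norm on $V$ agrees with the restriction of $\|\cdot\|$ (using that the $\mathbf{Z}_p$-bases of the $U_\ell$ concatenate to the chosen basis of $U$), verify density by truncation, and invoke uniqueness of completions. Each step is sound, and your remark about norm-orthogonality of the graded pieces is exactly the point that makes the isometry work.

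The paper itself does not supply a proof here; it simply records the statement and points to the proof of Proposition~7.3 in \cite{FM1}. Your write-up is thus more self-contained than what appears in the present paper, and the approach you give is presumably what lies behind that reference.
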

 
\subsection{The $p$-adic Heisenberg vertex algebra}\label{SSU}
As we have explained, any algebraic VOA $(V, Y, \mathbf{1}, \omega)$ defined over $\mathbf{Q}_p$ and having a
Virasoro compatible $\mathbf{Z}_p$-lattice  gives rise to a $p$-adic vertex algebra by the process of completion.\ There are a number of VOAs  that are known to have a $\mathbf{Z}$-base,
so that they produce $p$-adic vertex algebras in this way.\ We will discuss just one example in detail.
We will look at this example more closely in later sections especially with regard to its connections with $p$-adic modular forms.\  Here we introduce the basic construction.\ To be clear, we are considering the algebraic Heisenberg
VOA of central charge $c=1$ defined over $\mathbf{Q}_p$,\ which we denote by $S_{alg}$.

\medskip
$S_{alg}$  is generated by a single state $h$ of weight $1$ and its vertex operator $Y(h, z)=\sum_n h(n)z^{-n-1}$ has modes that close on the Heisenberg Lie algebra,
that is they satisfy the canonical commutator relations 
\begin{eqnarray}\label{CCR}
[h(m), h(n)]= h(m+n)+m\delta_{m, -n} \mathbf{1}.\ 
\end{eqnarray}
The totality of states 
\begin{eqnarray}\label{Hbase}
\{h(-n_1)...h(-n_k)\mathbf{1}\mid n_1\geq...\geq n_k\geq 1\}
\end{eqnarray}
is a base of $S_{alg}$, indeed one sees  that the $\mathbf{Z}_p$-span $U$ of this base is a Virasoro compatible $\mathbf{Z}_p$-lattice in $S_{alg}$.\ Note that $\omega=\tfrac{1}{2}h(-1)^2\mathbf{1}$,
 so that $\omega$ only belongs to $U$ when $p$ is odd.

\medskip
States in $S_{alg}$ are $\mathbf{Q}_p$-linear combinations 
$ \sum_I a_I h^I$ where $I:=\{n_1, \hdots, n_k\}$ is a multiset  of decreasing positive integers and $h^I:=h(-n_1)\hdots h(-n_k)\mathbf{1}$.\ Then set
$$ \left|\sum_I a_Ih^I \right| :=\sup_I |a_I |.$$
This is  the compatible norm described in Theorem \ref{thmnorm}, so that Theorem \ref{thmcomp} guarantees that the completion is a $p$-adic vertex algebra.\ This is our $p$-adic Heisenberg
vertex algebra.\ We denote it by $S_1$.
 \section{Some categories}
 In this Section we consider some of the objects we have introduced from a more categorical perspective.\ We consider four concrete categories of relevance.
 \begin{eqnarray*}
   && \mathbf{A} = \mbox{category of $p$-adic vertex algebras},\\
    && \mathbf{B}= \mbox{category of finite-dimensional, unital, commutative, $p$-adic Banach algebras},\\
    && \mathbf{C}=\mbox{category of $p$-adic vertex operator algebras}, \\
    && \mathbf{D}=\mbox{category of $p$-adically normed algebraic vertex operator algebras}.
 \end{eqnarray*}

 Morphisms in $\mathbf{A}$ are \textit{continuous linear maps} $U\stackrel{f}{\longrightarrow}V$ such that $f$ preserves vacuum states as well as all products, i.e., $f(a(n)b)=f(a)(n)f(b)$.
 $\mathbf{C}$ is a subcategory of $\mathbf{A}$ such that morphisms also preserve Virasoro states.\ 
 $\mathbf{B}$ is a well-known category and we just remind the reader that morphisms are continuous maps.\
 Finally, morphisms $U\stackrel{f}{\longrightarrow}V$ in $\mathbf{D}$ are algebraic morphisms of the algebraic vertex operator algebras that are also continuous in the norm topologies of $U$ and $V$.

  \subsection{An equivalence of categories}
  The main result is
  \begin{thm}\label{thmequiv}
There is an equivalence of categories 
$\mathbf{C}\stackrel{\sim}{\longrightarrow}\mathbf{D}$.
  \end{thm}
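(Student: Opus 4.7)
The plan is to construct mutually quasi-inverse functors $F\colon \mathbf{D}\to\mathbf{C}$ (completion) and $G\colon \mathbf{C}\to\mathbf{D}$ (passage to the dense algebraic subalgebra of integer $L(0)$-weight) and to check that both compositions are naturally isomorphic to the identity. On objects, $F$ is essentially Theorem \ref{thmcomp}: given $(V,Y,\mathbf{1},\omega,|\bullet|)\in\mathbf{D}$ one forms the Banach completion $V'$ and extends $Y$ by continuity as in that proof; the Virasoro modes extend by continuity as well, so (ii) of Definition \ref{defpVOA} holds, while axiom (iii) is inherited from the algebraic decomposition $V=\oplus V_{\ell}$ embedded in $V'$, and axiom (iv) is automatic since $V$ is dense in $V'$. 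On objects, $G$ is Theorem \ref{thmUdense}: given a $p$-adic VOA $V$ we let $U\subseteq V$ be the $\mathbf{Q}_p$-span of the integer $L(0)$-eigenspaces, which is an algebraic VOA; the restriction of the norm of $V$ to $U$ makes it a normed algebraic VOA since boundedness of modes and continuity of $Y$ restrict.

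Next I would define the action of $F$ and $G$ on morphisms. A morphism $f\colon U\to U'$ in $\mathbf{D}$ is by definition continuous, so it extends uniquely to a continuous linear map $\widehat{f}$ between the completions; continuity together with density forces $\widehat f$ to preserve the vacuum, the Virasoro state, and every $n$-th product, so $\widehat{f}$ is a morphism in $\mathbf{C}$. Conversely, a morphism $g\colon V\to W$ in $\mathbf{C}$ commutes with the Virasoro modes, hence in particular with $L(0)$, so it carries the integer $L(0)$-eigenspaces of $V$ into those of $W$ and therefore restricts to an algebraic morphism $G(g)\colon U_V\to U_W$ which is continuous for the restricted norms. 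Functoriality in both cases is immediate from uniqueness of continuous extensions and from the fact that restriction commutes with composition.

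To see $G\circ F \cong \mathrm{id}_{\mathbf{D}}$, I would invoke the description of $F(U)$ provided by Lemma \ref{lemalt}: its elements are convergent sums $\sum_{\ell} v_{\ell}$ with $v_{\ell}\in U_{\ell}$ and $v_{\ell}\to 0$. An eigenstate for $L(0)$ with integer eigenvalue $m$ in such a completion must, by uniqueness of the coefficients $v_\ell$ and continuity of $L(0)$, be concentrated in a single summand $U_m$; hence the integer-weight part of $F(U)$ is exactly $U$, with the norm restricting back to the original one. The natural isomorphism $F\circ G\cong \mathrm{id}_{\mathbf{C}}$ is furnished by the inclusion $U\hookrightarrow V$ of the dense subalgebra from $G(V)$ into $V$: by axiom (iv) of Definition \ref{defpVOA}, $U$ is dense in $V$, and since $V$ is a Banach space, the universal property of completion gives a canonical continuous isomorphism $F(G(V))\stackrel{\sim}{\to}V$ of $p$-adic VOAs; naturality in $V$ is automatic.

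The main obstacle is the verification that $G$ really lands in $\mathbf{D}$, equivalently that the dense algebraic VOA $U\subseteq V$ provided by Theorem \ref{thmUdense} inherits a genuinely compatible norm in the sense of Subsection 7.1, i.e.\ that the operator-norm bounds $|a(n)|\leq |a|$ (the condition \eqref{pfddef1}) continue to hold for the restricted norm. This is essentially the content that was deferred from the proof of Theorem \ref{thmUdense}, and it follows because the algebraic modes of $a\in U$ are just the restrictions to $U$ of the bounded operators $a(n)$ on $V$; the supremum is taken over a subset and hence does not increase, so (ii) and (iii) of the definition of a normed algebraic vertex algebra pass to $U$. Once this is in hand, the equivalence of categories follows from the two natural isomorphisms above.
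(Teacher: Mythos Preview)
Your proof follows essentially the same strategy as the paper's: completion gives a functor one way, restriction to the span of integer $L(0)$-eigenstates gives a functor the other way, and the two are shown to be quasi-inverse (you have swapped the paper's names $F$ and $G$). You supply an argument, via Lemma~\ref{lemalt}, for the direction the paper explicitly skips, while the paper spells out the naturality square through Lemma~\ref{lemextension} that you declare automatic; the one caveat is that Lemma~\ref{lemalt} as stated is proved for norms arising from a Virasoro-compatible $\mathbf{Z}_p$-lattice, so invoking it for an arbitrary object of $\mathbf{D}$ tacitly assumes the homogeneous projections $V\to V_\ell$ extend boundedly to the completion, a point neither you nor the paper makes fully explicit.
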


To prove this we have to be more precise in our approach.\ For example, when dealing with completions of normed algebraic vertex algebra $V$ as in Theorem \ref{thmcomp},
we should really refer to a \textit{completion} as a map $V\stackrel{\iota}{\longrightarrow} V'$ such 
that $V'$ is a $p$-adic vertex algebra, $\iota(V)$ is dense in $V'$ and $\iota$ preserves all products in
$V$.\ The existence of such a completion in this sense is proved in \textit{loc.\ cit.}\ With this in mind, we first prove
  \begin{lem}\label{lemextension} Let $T\stackrel{f}{\longrightarrow} U$ be a morphism in $\mathbf{D}$, and let
$T\stackrel{i}{\longrightarrow} V$, $U\stackrel{j}{\longrightarrow} W$ be completions of $T$ and $U$ respectively.\ Then there is a \textit{unique} morphism
$V\stackrel{\hat{f}}{\longrightarrow} W$ in $\mathbf{C}$ that makes the following diagram commute.
\begin{eqnarray*}
\xymatrix{ & V\ar[rr]^{\hat{f}}&& W\\
&\\
& T\ar[rr]_f\ar[uu]^{i}&& U\ar[uu]_j
}
\end{eqnarray*}
\end{lem}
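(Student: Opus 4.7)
The plan is to invoke the universal property of Banach-space completions and then verify that the resulting continuous linear extension automatically preserves all the algebraic structure.

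First, I would form the composite $g:=j\circ f:T\to W$, which is a continuous $\mathbf{Q}_p$-linear map from the normed space $T$ into the Banach space $W$. Since the completion map $i:T\to V$ is an isometric embedding with dense image, the standard extension-by-continuity argument produces a unique continuous linear map $\hat f:V\to W$ satisfying $\hat f\circ i=g$. This takes care of linearity, continuity, and the uniqueness of $\hat f$ as a continuous linear map.

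Next, $\hat f$ preserves vacuum and Virasoro states almost by fiat. The completion maps $i,j$ satisfy $i(\mathbf 1_T)=\mathbf 1_V$, $i(\omega_T)=\omega_V$ and analogously for $j$, while $f$ preserves $\mathbf 1$ and $\omega$ as a morphism in $\mathbf D$. Hence $\hat f(\mathbf 1_V)=\hat f(i(\mathbf 1_T))=j(f(\mathbf 1_T))=\mathbf 1_W$, and similarly $\hat f(\omega_V)=\omega_W$.

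The substantive step is the product identity $\hat f(a(n)b)=\hat f(a)(n)\hat f(b)$ for all $a,b\in V$ and $n\in\mathbf Z$. When $a=i(s)$, $b=i(t)$ with $s,t\in T$ one computes
\[
\hat f(i(s)(n)i(t))=\hat f(i(s(n)t))=j(f(s(n)t))=j(f(s))(n)j(f(t))=\hat f(i(s))(n)\hat f(i(t)),
\]
using that $i$, $j$, and $f$ all preserve $n$th products. For general $a,b\in V$ I would argue by density. The ultrametric estimate
\[
|a(n)b-a'(n)b'|\leq\sup\bigl\{|a-a'|\,|b|,\ |a'|\,|b-b'|\bigr\},
\]
valid in both $V$ and $W$ by the $p$-adic field axiom (\ref{pfddef1}), shows that each side of the product identity is \emph{jointly} continuous in $(a,b)$. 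Since they agree on the dense subset $i(T)\times i(T)\subseteq V\times V$, they agree on all of $V\times V$.

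The main obstacle is precisely this last density step: without the uniform bound (\ref{pfddef1}) one would have only separate continuity in each variable, which is insufficient to propagate a bilinear identity from a dense subproduct to all of $V\times V$. Uniqueness of $\hat f$ in $\mathbf C$ is then immediate, since any morphism $V\to W$ in $\mathbf C$ lifting $f$ must agree with $g$ on $i(T)$ and hence with $\hat f$ on all of $V$ by continuity and density.
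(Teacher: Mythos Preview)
Your proposal is correct and follows essentially the same approach as the paper: invoke the universal property of completions to get the unique continuous linear $\hat f$, verify the product identity on $i(T)$ by direct computation, and extend to all of $V$ by density and continuity. Your treatment is in fact slightly more explicit than the paper's, which simply asserts $a(n)b=\lim_i\lim_j a_i(n)b_j$ without writing down the joint-continuity estimate (\ref{pfddef1}) that underpins it, and which also leaves the preservation of $\mathbf 1$ and $\omega$ implicit.
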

\begin{proof} By a standard result (\cite{BGR}, Subsection 1.1.7, Proposition 6), just at the level of normed spaces we know that there is a \textit{unique, continuous}  $\hat{f}$ that makes the same diagram commute.\
So it remains to show that this $\hat{f}$ is a morphism in $\mathbf{C}$, i.e., that it preserves all products in $V$.

\medskip
Because
$i(a(n)b)=i(a)(n)i(b)$ for states $a, b\in T$ and similarly for $j$ we have
$\hat{f}((i(a)(n)i(b))=   \hat{f}(i(a(n)b))=jf(a(n)b)) =j(f(a)(n)f(b))=(jf(a))(n)(jf(b))=(\hat{f}i(a))(n)(\hat{f}i(b))$.\ This proves the desired result for the
action of $\hat{f}$ on $i(T)$.\
In general, let $a:=\lim_i a_i,b:=\lim_j b_i$ with $a_i, b_j\in T$.\ Then 
$a(n)b=\lim_i\lim_ja_i(n)b_j$ and using the continuity of $\hat{f}$ we see that 
$$\hat{f}(a(n)b)=\lim_i\lim_j \hat{f}(a_i(n)b_j)= \lim_i\lim_j (\hat{f}(a_i))(n)(\hat{f}(b_j))=(\hat{f}(a))(n)(\hat{f}(b)),$$
as required.
\end{proof}

For each object $T$ in $\mathbf{D}$ we let $T\longrightarrow G(T)$ be a completion so that $G(T)$ is an object in $\mathbf{C}$.\ Moreover if
$T\stackrel{f}{\longrightarrow}U$ is a morphism in
$\mathbf{D}$ then
by Lemma \ref{lemextension} there is a commuting diagram
\begin{eqnarray*}
\xymatrix{ & G(T)\ar[rr]^{\hat{f}={G(f)}}&& G(U)\\
&\\
& T\ar[rr]_f\ar[uu]&& U\ar[uu]
}
\end{eqnarray*}
where the top row is an arrow in $\mathbf{C}$ which defines $G$.\ It is straightforward to check that 
$G:\mathbf{D}\longrightarrow\mathbf{C}$ is a
covariant functor.

\medskip
The functor $F:\mathbf{C}\longrightarrow\mathbf{D}$ is defined by restriction.\ Thus if $V\stackrel{f}{\longrightarrow}W$ is a morphism in $\mathbf{C}$ then
\begin{eqnarray*}
\xymatrix{ & V\ar[rr]^f\ar[dd]&& W\ar[dd]\\
&\\
& F(V)\ar[rr]_{res f=F(f)}&& F(W)
}
\end{eqnarray*}
where $F(V)$ is the span of the integral eigenstates of $L(0)$ in its action on $V$. We must show that the bottom row is a morphism in $\mathbf{D}$.\ That $F(V), F(W)$ are objects in
$\mathbf{D}$ follows from Theorem \ref{thmUdense}, and $F(f)$ preserves products $\mathbf{1}, \omega$ and all products because $f$ does.\ Then we check that $F$ is also a functor.

\medskip
Next we show that there is a natural transformation
$\alpha: G\circ F\sim I_{\mathbf{C}}$.\ If $V$ is an object in
$\mathbf{C}$ then $\alpha_V$ is defined by 
\begin{eqnarray*}
\xymatrix{ & V  && G(F(V))\ar[ll]_{\alpha_V}   \\
&\\
&&F(V)\ar[uur]\ar[uul]
}
\end{eqnarray*}
The two diagonal arrows are completions of $F(V)$ and
$\alpha_V$ is the unique morphism in $\mathbf{C}$ that makes the diagram commute (Lemma \ref{lemextension}).\ Now consider the following diagram where $T=F(V), U=F(W)$.
\begin{eqnarray*}
\xymatrix{&& V\ar[rrrr]^f &&&& W\\
&\\
&&& T\ar[rr]^{F(f)=res f}\ar[uul]\ar[ddl]&&U\ar[uur]\ar[ddr] \\
&\\
&& G(T)\ar[rrrr]_{(G\circ F)(f)}\ar[uuuu]^{\alpha_V} &&&& G(U)\ar[uuuu]_{\alpha_W}
}
\end{eqnarray*}
The two inner triangles commute by the definition of
$\alpha$, the inner quadrilaterals commute for functorial  reasons, and therefore the outer square commutes.\ This says that $\alpha$ is indeed a natural transformation, as asserted.

\medskip
Finally, by  a completely analogous procedure we can show that $F\circ G \sim I_{\mathbf{D}}$.\ We skip the details.\ This will then prove Theorem \ref{thmequiv}.
\subsection{Category $\mathbf{B}$}
We have already seen in the Example at the end of Subsection \ref{SSpVA} that a unital, commutative $p$-adic Banach algebra is a $p$-adic vertex algebra for which each vertex algebra $Y(a, z)=a(-1)$ is a constant. Indeed, the category of unital, commutative $p$-adic Banach algebras is a full subcategory of the category $\mathbf{A}$.

\medskip
We are interested here in such Banach algebras that are also finite-dimensional.\ We have
\begin{lem} $\mathbf{B}\subset \mathbf{C}$ is a full subcategory.\ It consists of the objects in $\mathbf{C}$ having Virasoro vector $\omega$ equal to $0$.
\end{lem}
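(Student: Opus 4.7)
The assertion has two parts—identifying the $\mathbf{C}$-objects with $\omega=0$ as precisely those coming from $\mathbf{B}$, and checking that the morphism sets agree—so the plan is to prove a pair of implications on objects followed by a morphism check. For the easy direction, let $V\in\mathbf{B}$. The Example at the end of Subsection \ref{SSpVA} already furnishes a $p$-adic vertex algebra structure via $Y(a,z):=a(-1)$. Declaring $\omega=0$ makes every $L(n)=0$, so the Virasoro relations of Subsection \ref{SSVOA}(a) hold with central charge $c=0$, and translation covariance (b) reduces to the tautology $0=\partial_z Y(a,z)$, which holds because each $Y(a,z)$ is constant in $z$. With $L(0)=0$, every state is an $L(0)$-eigenstate of eigenvalue $0$, so the space $U$ in Definition \ref{defpVOA}(iii) equals $V=V_0$; finite-dimensionality over $\mathbf{Q}_p$ fulfills (c) and (d) of Subsection \ref{SSVOA}, and density of $U$ in $V$ is automatic.

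Conversely, suppose $(V,Y,\mathbf{1},0)\in\mathbf{C}$. All $L(n)$ vanish, so translation covariance forces $\partial_z Y(a,z)=[L(-1),Y(a,z)]=0$; hence every vertex operator is constant, $Y(a,z)=a(-1)$, with $a(n)=0$ for $n\neq -1$. Since $L(0)=0$, axiom (iii) gives $U=V=V_0$, which is free of finite rank over $\mathbf{Q}_p$ and therefore finite-dimensional. Define the product $a\cdot b := a(-1)b$; creativity together with $Y(\mathbf{1},z)=\id_V$ makes $\mathbf{1}$ a two-sided unit, and (\ref{pfddef1}) becomes the Banach-algebra inequality $|a\cdot b|\leq|a||b|$. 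Associativity and commutativity are then extracted from the $p$-adic Jacobi identity: for associativity, plugging in $(r,s,t)=(0,-1,-1)$ collapses both sides to a single term (since only mode $-1$ of any state is nonzero) and yields $(a\cdot b)\cdot c=a\cdot(b\cdot c)$; commutativity follows by specializing the identity at $(r,s,t)=(-1,-1,0)$ with $c=\mathbf{1}$ and using creativity to turn $a(-1)\mathbf{1}=a$, $b(-1)\mathbf{1}=b$. This places $V$ in $\mathbf{B}$.

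For fullness, a morphism $f:V\to W$ in $\mathbf{C}$ is a continuous $\mathbf{Q}_p$-linear map preserving $\mathbf{1}$, $\omega$, and every product $a(n)b$. When $\omega_V=\omega_W=0$ the Virasoro condition is vacuous, and because $a(n)=0$ for $n\neq -1$ on both sides, product preservation collapses to the single equation $f(a\cdot b)=f(a)\cdot f(b)$. Combined with $f(\mathbf{1})=\mathbf{1}$ and continuity (which is automatic on finite-dimensional $p$-adic normed spaces), this is exactly the datum of a morphism in $\mathbf{B}$.

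The delicate step is the Jacobi-identity extraction of commutativity and associativity in the converse direction: the identity is stated to accommodate infinite sums, and one must carefully track sign conventions and verify that only a single term on each side survives for the chosen $(r,s,t)$. Once this bookkeeping is settled, the remainder of the argument reduces to invoking the already-established creativity axiom, translation covariance, and the boundedness inequality (\ref{pfddef1}).
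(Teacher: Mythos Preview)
Your proof is correct and follows essentially the same strategy as the paper: deduce from $\omega=0$ that vertex operators are constant, invoke the axioms to get finite-dimensionality, and extract commutativity and associativity from the Jacobi identity. The only cosmetic difference is that you obtain $a(n)=0$ for $n\neq -1$ via translation covariance $\partial_z Y(a,z)=[L(-1),Y(a,z)]=0$, whereas the paper uses the $L(0)$-grading identity $L(0)a(n)b=(-n-1)a(n)b$; you also spell out the fullness check and the specific $(r,s,t)$ specializations, which the paper leaves implicit.
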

\begin{proof} Let $V$ be an object in
$\mathbf{C}$ having $\omega=0$.\ Then $L(0)$ is the zero operator, i.e., $V=\ker L(0)$.\ By hypothesis
this has finite dimension, so $\dim V$ is finite.\ Suppose that $a, b\in V$.\ We have seen that for all integers $n$ we have $L(0)a(n)b=(-n-1)a(n)b$.\ So if $n\neq-1$ then $a(n)b=0$.\
It follows that the vertex operator
$Y(a, z)$ for $V$ is a constant $a(-1)$.\ Now one checks using the Jacobi identity that, when equipped with the product $a(-1)b$, $V$ is an associative, commutative algebra.\ Moreover by
(\ref{pfddef1}) we have $|a(-1)b|\leq|a||b|$.\ Thus
$V$ is a finite-dimensional, commutative $p$-adic Banach algebra.\ It has an identity element $\mathbf{1}$.\ Per the comments at the beginning of this Subsection every such $p$-adic Banach algebra is an object in $\mathbf{C}$ when regarded as having $\omega=0$, and the Lemma follows.
\end{proof}
  \section{$p$-adic modular forms and the $p$-adic Heisenberg vertex algebra, I}\label{SHI}
  Following the Introduction, we shall explain in this Subsection how $p$-adic modular forms are connected to the $p$-adic Heisenberg vertex algebra $S_1$.\

  \medskip
  Let $(V, Y, \mathbf{1}, \omega)$ be an algebraic VOA defined over $\mathbf{Q}_p$ having central charge $c$. For a homogeneous state $a\in V_{\ell}$
  we define $o(a):=a(\ell-1)$.\ This is the \textit{zero-mode} of $a$, so-called because one knows that
  $o(a): V_m\rightarrow V_m$ for all $m$ has weight $0$ as an operator.
 
\subsection{$1$-point functions}
  After the previous paragraph the following definition
  makes sense.
  \begin{eqnarray}\label{defF}
F(a) := \Tr_V o(a)q^{L(0)-c/24} := q^{-c/24} \sum_{\ell} \Tr_{V_{\ell}} o(a)q^{\ell}.
\end{eqnarray}
This is the $1$-point function for $a$.\ As it stands it is nothing but a formal $q$-series.
 We extend $F$ to a map on $V$ by linearity
so that we obtain 
\begin{eqnarray*}
F: V\longrightarrow q^{-c/24}\mathbf{Q}_p[q^{-1}][[q]].
\end{eqnarray*}

 \textit{Modular invariance in  VOA theory} by and large refers to the investigation of the modular properties of $1$-point functions.\ The reader who is uninured with VOA theory should be surprised that these $1$-point functions have anything to do with  modular forms.
We illustrate with some relevant examples in which
$V=S_{alg}$.

\medskip
The most basic example of a $1$-point function is the \textit{graded character} of $V$, aka the \textit{partition function}, defined to be
  \begin{eqnarray}\label{defF1}
F(\mathbf{1}) := \Tr_V q^{L(0)-c/24} := q^{-c/24} \sum_{\ell} \dim V_{\ell} q^{\ell},
\end{eqnarray}
In the case of $S_{alg}$ it is well-known that
 \begin{eqnarray*}
F(\mathbf{1}) =q^{-1/24}\sum_{n=0}^{\infty} p(n)q^n
:= \eta(q)^{-1}=q^{-1/24}\prod_{n=0}^{\infty} (1-q^n)^{-1},
\end{eqnarray*}
the inverse of Dedekind's eta-function.

\medskip
It is convenient to renormalize the $1$-point function $F$ by multiplying by $\eta$.\ Thus we define
\begin{eqnarray*}
Z: S_{alg} \rightarrow \mathbf{Q}_p[[q]], \ \ a\mapsto \eta(q) F(a).
\end{eqnarray*}

Recall from Section \ref{Spadic} the algebra 
$\mathbf{Q}_p[E_2, E_4, E_6]$
of quasimodular forms, denoted by $Q$ in the Introduction.\ Then we have \cite{MT}, \cite{MT2}, \cite{DMN}.
\begin{thm}\label{thmZsurj} $Z$ defines a linear surjection
\begin{eqnarray*}
Z: S_{alg}\longrightarrow Q.
\end{eqnarray*}
$\hfill\Box$
\end{thm}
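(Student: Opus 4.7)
The plan is to apply Zhu's recursion formula for $1$-point trace functions on VOAs, which is a purely formal identity with rational coefficients and therefore transfers directly from the complex setting to $\mathbf{Q}_p$ (both being of characteristic zero). The argument has two halves: showing $\im(Z) \subseteq Q$, and exhibiting explicit preimages for the generators of $Q$.

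First I would establish the base case: since $o(\mathbf{1}) = \id_{S_{alg}}$, the partition function computation cited in the excerpt yields $F(\mathbf{1}) = \eta(q)^{-1}$, so $Z(\mathbf{1}) = 1 \in Q$. Next, I would induct on the total weight of a monomial $h^I = h(-n_1)\cdots h(-n_k)\mathbf{1}$ (or equivalently on $|I|=n_1+\cdots+n_k$) using Zhu's recursion. Specifically, for $n\geq 1$ and any homogeneous state $a$, the recursion expresses
\begin{equation*}
Z\bigl(h(-n)\,a\bigr) \;=\; \delta_{n,1}\,\partial_q Z(a) \;+\; \sum_{m\geq 1} \binom{n-1+m}{m}\,\tilde{E}_{n+m}(q)\cdot Z\bigl(h(m-1)\,a\bigr),
\end{equation*}
where the $\tilde{E}_{n+m}$ are (up to normalization) the classical Eisenstein series $E_{2k}$ when $n+m$ is even, and vanish when $n+m$ is odd (by the parity of the Heisenberg trace). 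Since the canonical commutation relation (\ref{CCR}) lets us move $h(m-1)$ through the remaining generators in $a$ to produce states of strictly smaller weight than $h(-n)a$, the induction hypothesis applies and the right-hand side is a $\mathbf{Q}_p$-polynomial in $E_2,E_4,E_6$, so $Z(h(-n)a) \in Q$.

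For surjectivity I would produce explicit preimages of $E_2, E_4, E_6$. The simplest is
\begin{equation*}
Z(\omega) \;=\; \tfrac{1}{2}\,Z\bigl(h(-1)^2\mathbf{1}\bigr),
\end{equation*}
which by the recursion above (applied to $a = h(-1)\mathbf{1}$, together with the fact that $\partial_q Z(h(-1)\mathbf{1})=0$) returns a nonzero scalar multiple of $E_2$. Applying the recursion to $h(-3)h(-1)\mathbf{1}$ and $h(-5)h(-1)\mathbf{1}$ (and similar depth-two monomials) produces explicit combinations yielding $E_4$ and $E_6$ modulo polynomials in lower-weight Eisenstein series, so by triangular elimination one obtains preimages for each generator. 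Algebraic independence of $E_2,E_4,E_6$ over $\mathbf{Q}_p$ then gives surjectivity.

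The main obstacle is the combinatorial bookkeeping needed to verify the explicit Zhu recursion for the Heisenberg VOA with the correct Eisenstein normalizations, and in particular to confirm that no unwanted cancellations prevent $E_4$ and $E_6$ from actually appearing in the images of the chosen states. This is precisely what is carried out in \cite{MT}, \cite{MT2}, \cite{DMN}; the only novelty in the $p$-adic setting is to observe that every coefficient arising in the recursion lies in $\mathbf{Q}$, so that the entire argument takes place over $\mathbf{Q}_p$ without modification.
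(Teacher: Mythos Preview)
The paper does not supply its own proof of this theorem; it is stated with a $\Box$ and attributed to \cite{MT}, \cite{MT2}, \cite{DMN}. Your sketch follows the approach of those references (Zhu recursion applied to the Heisenberg VOA), so in that sense your proposal and the paper's treatment agree.

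That said, two points in your sketch need correction. First, the $\delta_{n,1}\,\partial_q Z(a)$ term in your recursion is wrong for the state $h$: Zhu's recursion for a weight-one primary $u$ has leading term $\delta_{n,1}\Tr_V o(u)o(a)q^{L(0)-c/24}$, and on the Fock module $S_{alg}$ the zero mode $o(h)=h(0)$ acts as $0$, so this term simply vanishes. The operator $q\partial_q$ arises when one inserts the Virasoro vector $\omega$, not $h$. (Relatedly, the recursion is naturally stated in square-bracket modes $h[-n]$ rather than $h(-n)$; this does not affect the conclusion but does affect the precise coefficients you wrote down.)

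Second, and more seriously, your surjectivity argument has a gap. Exhibiting preimages of $E_2,E_4,E_6$ does \emph{not} imply that $Z$ surjects onto $Q=\mathbf{Q}_p[E_2,E_4,E_6]$, because $Z$ is merely linear, not a ring homomorphism; and ``algebraic independence of $E_2,E_4,E_6$'' concerns injectivity of the polynomial parameterization, not surjectivity of $Z$. What is actually required, and what \cite{DMN} carries out, is a weight-by-weight induction showing that iterated applications of the recursion to states of the form $h[-n_1]\cdots h[-n_r]\mathbf{1}$ produce, in a triangular manner, \emph{every} monomial in the Eisenstein series. That inductive step is the real content of the surjectivity claim and is missing from your outline.
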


Having previously described $S_{alg}$ and $Q$ as $p$-adically normed linear spaces the next result is crucial \cite{FM1}:
\begin{thm} $Z$ is a continuous map. 
$\hfill\Box$
\end{thm}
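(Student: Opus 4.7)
Plan. The map $Z: S_{alg}\to Q$ is linear, so continuity is equivalent to the existence of a bound $|Z(a)|\leq C|a|$; I will in fact prove the stronger contraction $|Z(a)|\leq |a|$. Expanding $a = \sum_I a_I h^I$ in the PBW basis (\ref{Hbase}) as a finite sum, linearity and the ultrametric inequality give
\begin{align*}
|Z(a)| = \left|\sum_I a_I Z(h^I)\right| \leq \sup_I |a_I|\cdot|Z(h^I)| = |a|\cdot\sup_I|Z(h^I)|,
\end{align*}
so the continuity of $Z$ reduces to the uniform estimate $|Z(h^I)|\leq 1$ for every basis monomial $h^I$.

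Next I would rewrite $Z(h^I) = \eta(q)F(h^I) = \prod_{n\geq 1}(1-q^n)\cdot\sum_\ell\Tr_{V_\ell}o(h^I)\,q^\ell$, using that the Heisenberg has central charge $c=1$ so the $q^{\pm 1/24}$ factors cancel. Since $\prod_{n\geq 1}(1-q^n)$ has integer Fourier coefficients and hence sup-norm at most $1$, the desired bound $|Z(h^I)|\leq 1$ reduces to showing $\Tr_{V_\ell}o(h^I)\in\mathbf{Z}$ for every $\ell$ and $I$. Here I would exploit the integral structure of the Heisenberg: by (\ref{Hbase}) each weight space $V_\ell$ carries a $\mathbf{Z}$-basis of monomials $h^J$, and the CCR (\ref{CCR}) being integral, each mode $h(m)$ has integer matrix entries in this basis. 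Any $\mathbf{Z}$-linear combination of normal-ordered monomials $:h(m_1)\cdots h(m_r):$ therefore also acts by integer matrices, and its trace on $V_\ell$ lies in $\mathbf{Z}$.

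It remains to verify that the zero mode $o(h^I)=h^I(\ell_I - 1)$ really is such a $\mathbf{Z}$-linear combination of normal-ordered mode products, and this is the one delicate step. The Heisenberg reconstruction formula gives
\begin{align*}
Y(h^I, z) = \left[\prod_i (n_i-1)!\right]^{-1}\, :\!\partial_z^{n_1-1}Y(h,z)\cdots \partial_z^{n_k-1}Y(h,z)\!:,
\end{align*}
and the elementary identity $\frac{1}{(n-1)!}\partial_z^{n-1}(z^{-k-1}) = (-1)^{n-1}\binom{k+n-1}{n-1}z^{-k-n}$ shows that the factorial denominators are fully absorbed by integer binomial coefficients produced by the derivatives acting on each $h(k)z^{-k-1}$. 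Normal ordering then rearranges mode products using only the integral CCR and introduces no new denominators. The main obstacle will be the bookkeeping of integrality through the normal-ordered product of $k$ derivative fields, but this reduces by induction on $k$ to the single-field case together with the integrality of the CCR. Once this is verified, every entry of the matrix of $o(h^I)$ on $V_\ell$ is an integer, its trace lies in $\mathbf{Z}$, and hence $|Z(h^I)|\leq 1$, proving that $Z$ is a norm-contraction and in particular continuous.
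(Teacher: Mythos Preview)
Your argument is correct and follows the natural route: continuity of the linear map $Z$ reduces via the sup-norm to the uniform bound $|Z(h^I)|\leq 1$, which in turn reduces to the $p$-integrality of each trace $\Tr_{V_\ell}o(h^I)$, and this follows from the integral structure of the Heisenberg VOA. The paper does not supply a proof here but cites \cite{FM1}, where the reasoning is of exactly this type.

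One simplification is worth pointing out. The paper already records in Subsection~\ref{SSU} that the $\mathbf{Z}_p$-span $U$ of the PBW basis (\ref{Hbase}) is a Virasoro-compatible lattice, which by definition (\ref{Ulattice}) means $U$ is closed under \emph{all} products $a(n)b$. Granting this, the zero mode $o(h^I)=h^I(|I|-1)$ preserves each $U_\ell$, so its matrix in the lattice basis has entries in $\mathbf{Z}_p$ and hence $\mathbf{Z}_p$-integral trace; the bound $|Z(h^I)|\leq 1$ is then immediate without ever writing down the reconstruction formula. Your computation with $\tfrac{1}{(n-1)!}\partial_z^{n-1}$ and the binomial identity is in effect a hands-on verification of this closure property for zero modes, which is fine but more laborious than simply invoking the known integral form of the Heisenberg. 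The one step you leave implicit---that the iterated VOA normal-ordered product of derivatives of $h(z)$ agrees with the fully Wick-ordered product, so that each mode of $Y(h^I,z)$ is a locally finite $\mathbf{Z}$-linear combination of monomials $h(m_1)\cdots h(m_k)$---is the standard Wick theorem for free bosons and causes no difficulty.
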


By a basic property we can uniquely extend $Z$ to  a continuous map between the completions.\ Since the completion
of $S_{alg}$ is the $p$-adic Heisenebrg vertex algebra  $S_1$ and the completion of $Q$ is the space of $p$-adic modular forms $M_p$, we obtain

\begin{thm}\label{thmcommdiag}There is a unique continuous extension $\widehat{Z}$ of $Z$ so that the following diagram commutes
\begin{center}
\xymatrix{ S_1\ar[rr]^{\widehat{Z}} && M_p\\
S_{alg}\ar@{->>}[rr]_Z\ar[u]&& Q\ar[u]
}
\end{center}
$\hfill\Box$
\end{thm}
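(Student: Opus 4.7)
The plan is to reduce the statement to the standard extension theorem for continuous linear maps between dense subspaces of normed spaces, which is precisely the content of \cite{BGR}, Subsection 1.1.7, Proposition 6 (the same tool invoked in the proof of Lemma \ref{lemextension}). The essential hypotheses have already been arranged by the preceding results: $S_{alg}$ embeds as a dense subspace of its $p$-adic completion $S_1$, $Q$ embeds as a dense subspace of its $p$-adic completion $M_p$, and $M_p$ is complete by Serre's definition. The only genuinely analytic input needed beyond set-up is that $Z\colon S_{alg}\to Q$ is continuous, which is the content of the immediately preceding theorem. Since $Z$ is linear, continuity is equivalent to uniform continuity, and this is exactly what the extension theorem requires.

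Concretely, I would construct $\widehat{Z}$ as follows. For $s\in S_1$, pick a Cauchy sequence $(s_i)$ in $S_{alg}$ with $s_i\to s$. By continuity of $Z$, the sequence $(Z(s_i))$ is Cauchy in $Q$, hence Cauchy in $M_p$, and therefore converges to some element of $M_p$ which I take to be $\widehat{Z}(s)$. A routine check, using linearity and continuity of $Z$ together with the fact that the difference of two sequences in $S_{alg}$ converging to the same $s\in S_1$ is a null sequence, shows that this value is independent of the chosen approximating sequence. Linearity and continuity of $\widehat{Z}$ then follow from the corresponding properties of $Z$ by passing to limits.

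The commutativity of the square is immediate: for $s\in S_{alg}$, take the constant sequence $s_i=s$ to conclude that $\widehat{Z}(s)=Z(s)$ as elements of $M_p$. Uniqueness is the standard density argument: any two continuous maps $S_1\to M_p$ that agree on the dense subset $S_{alg}$ must agree everywhere, since their difference is a continuous map vanishing on a dense set and hence identically zero.

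There is no real obstacle here; the difficulty has already been absorbed in establishing continuity of $Z$ at the algebraic level. The present theorem is then the formal functorial consequence of that continuity together with the universal property of the $p$-adic completions $S_1$ and $M_p$.
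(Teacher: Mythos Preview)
Your proposal is correct and matches the paper's own approach exactly: the paper simply states ``By a basic property we can uniquely extend $Z$ to a continuous map between the completions'' and then records the theorem with a $\Box$, so the entire content is precisely the standard extension-by-density argument you spell out, with the continuity of $Z$ (the preceding theorem) as the sole substantive input. Your explicit invocation of \cite{BGR}, Subsection 1.1.7, Proposition 6 is appropriate and consistent with its use elsewhere in the paper.
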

In this way we have defined $1$-point functions for the $p$-adic Heisenberg vertex algebra $S_1$.\ Of course 
if $a\in S_1$ then $\widehat{Z}(a)$ is generally not a trace function like (\ref{defF}) but rather a limit of (normalized) trace functions.

\subsection{The square bracket VOA}\label{SSsqbracket}In the next few Subsections we will explain how to refine the discussion of the previous Subsection by taking \textit{weights} into account.\
In particular, this entails a discussion of how to make $Z$ into a graded map.\ This is a well-known feature of VOA theory, and this Subsection presents the basic facts
about this.\ For further details, see e.g., \cite{MT}, \cite{Z}.

\medskip
Let $(V, Y, \mathbf{1}, \omega)$ be an algebraic VOA of central charge $c$ defined over $\mathbf{Q}_p$.\ (In practice we only consider the Heisenberg VOA $S_{alg}$.)\
Then there is a second VOA $(V, Y[\ ], \mathbf{1}, \widetilde{\omega})$ called the \textit{square bracket} VOA.\ It has the same state space and vacuum vector, however the new
Virasoro vector is defined by $\widetilde{\omega}:= \omega-\tfrac{c}{24}\mathbf{1}$.\ Most importantly, the $Y$-map is defined as follows.\ For $a\in V_{\ell}$ we set
\begin{eqnarray*}
&&Y[a, z] := e^{\ell z}Y(a, e^z-1) :=\sum_{n\in\mathbf{Z}} a[n]z^{-n-1},\ \ Y[\widetilde{\omega}, z]:=\sum_{n\in\mathbf{Z}} L[n]z^{-n-2}
\end{eqnarray*}
and then extend $Y[\ ]$ linearly to $V$.\ Evidently, one may express the square bracket modes $a[n]$ in terms of the $a(n)$ modes (\cite{DLM}, \cite{Z}), but this will generally be unenlightening except for the
following two Virasoro modes  of particular relevance to us:
\begin{eqnarray}\label{Virmodes}
&&L[-1] = L(0)+L(-1),\ \ L[0] = L(0)+\sum_{n=1}^{\infty} \tfrac{(-1)^{n+1}}{n(n+1)}L(n).
\end{eqnarray}

As a matter of fact there is an isomorphism of algebraic VOAs (\cite{H}, \cite{FBZ}, \cite{Z}) $(V, Y[\ ], \mathbf{1}, \widetilde{\omega})\stackrel{\cong}{\longrightarrow} (V, Y, \mathbf{1}, \omega)$,
although here this construction  is mainly important for the following reason:\ as the underlying state space of the square bracket VOA, $V$ acquires a second conformal decomposition which we write as
\begin{eqnarray*}
&& V= \oplus_{\ell\in\mathbf{Z}} V_{[\ell]}, \ \  V_{[\ell]}:=\{a\in V\mid L[0]a = \ell a\}.
\end{eqnarray*}
This is called the square bracket grading on $V$.\ Applying this machinery to the algebraic Heisenberg VOA, the basic fact  is this
(\cite{DMN}, \cite{MT}, \cite{MT2}):
\begin{thm}\label{thmgdd} If $S_{alg}$ is equipped with its square bracket grading and $Q$ its natural $\mathbf{Z}$-grading then the normalized $1$-point function
$Z$ of Theorem \ref{thmZsurj}  is a graded map.\ That is, if $a\in V_{[\ell]}$ then $Z(a)$ is a quasimodular form of weight $\ell$. $\hfill\Box$
\end{thm}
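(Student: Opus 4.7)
The plan is to prove the graded property by induction on the length of a square-bracket PBW monomial, using Zhu's recursion formula for $1$-point trace functions. First I would note that the square-bracket VOA $(S_{alg}, Y[\ ], \mathbf{1}, \widetilde{\omega})$, being isomorphic as an algebraic VOA to $(S_{alg}, Y, \mathbf{1}, \omega)$, inherits a PBW-type spanning set consisting of monomials $h[-n_1]\cdots h[-n_k]\mathbf{1}$ with $n_1\geq\ldots\geq n_k\geq 1$, and each such state lies in $V_{[\ell]}$ with $\ell = n_1+\cdots+n_k$. The base case $k=0$ gives $\mathbf{1}\in V_{[0]}$ and $Z(\mathbf{1})=\eta(q)F(\mathbf{1})=1$, a constant quasimodular form of weight $0$ as required.

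For the inductive step I would invoke Zhu's recursion in its square-bracket form (cf.\ \cite{Z}, \cite{MT}, \cite{DMN}), which expresses $F(h[-n]b)$ as a finite linear combination of terms of the shape $G_{n+r}(q)\, F(h[r]b)$ plus a base correction coming from trace manipulations, where $G_{n+r}$ is an Eisenstein-type series of weight $n+r$ (with the weight $2$ quasimodular $E_2$ permitted). The Heisenberg commutation relations $[h[m],h[n]] = m\delta_{m,-n}\mathbf{1}$ cause the recursion to terminate quickly, and they force only even Eisenstein weights to appear. The crucial weight-tracking observation is that the square-bracket weight of $h[-n]b$ exceeds that of $h[r]b$ by exactly $n+r$, matching the weight of the Eisenstein factor to its right.

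Multiplying throughout by $\eta(q)$ yields the analogous recursion for $Z$, and the inductive hypothesis then shows that each summand is a product of a quasimodular form of weight $n+r$ and one of weight $\ell-(n+r)$, hence quasimodular of total weight $\ell$; the correction term reduces to a shorter monomial and is handled by induction down to the base case. The main obstacle is organizing the recursion correctly and confirming that the $\eta$-normalization preserves weight tracking: the $\eta$-factor absorbs the $q^{-c/24}$ prefactor along with the quasimodular contribution coming from the logarithmic derivative of $\eta$, so that one lands cleanly in $Q=\mathbf{Q}_p[E_2,E_4,E_6]$ without spurious non-quasimodular artifacts. Once this bookkeeping is made precise, the theorem reduces to the explicit Heisenberg computation already present in \cite{DMN}, \cite{MT}, \cite{MT2}.
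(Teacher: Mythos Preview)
The paper does not supply a proof of this theorem: it is stated with references to \cite{DMN}, \cite{MT}, \cite{MT2} and closed with a $\Box$. Your sketch via Zhu's recursion and induction on square-bracket PBW monomials is exactly the argument carried out in those cited references, so your proposal is correct and aligned with what the paper defers to.
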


\subsection{$X$-weights in the $p$-adic Heisenberg vertex algebra}\label{SSX}
 We have already explained Serre's theorem on weights of $p$-adic modular forms in Section \ref{Spadic}.\ We can now use this to
 show that certain limit states in the $p$-adic Heisenberg vertex algebra $S_1$ can similarly acquire a weight in $X$.\ To be precise, we have
 \begin{thm}\label{thmXwt}  Suppose that $(a_i)$ is a Cauchy sequence of states in $S_{alg}$ such that $a_i\in V_{[\ell_i]}$.\ 
 Then $(\ell_i)$ is a Cauchy sequence with a limit
 $\ell\in X$.\ If $a:=\lim_i a_i\in S_1$ we say that $a$ has $X$-weight $\ell$.\
 Then $\widehat{Z}$ is a graded map  in the sense that it preserves $X$-weights of states and
 $\widehat{Z}(a)=\lim_i Z(a_i)$ is a $p$-adic modular form of weight $\ell$.
 \end{thm}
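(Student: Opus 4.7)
The plan is to transport the problem from $S_1$ to the space of (quasi)modular forms via the continuous $1$-point function $\widehat{Z}\colon S_1 \to M_p$, and there to invoke Serre's theorem on $p$-adic continuity of weights (recalled in Section \ref{Spadic}). The crucial input is Theorem \ref{thmgdd}: if $a_i \in V_{[\ell_i]}$, then $Z(a_i)$ is a quasimodular form of weight $\ell_i$. Thus the Cauchy sequence $(a_i)$ in $S_{alg}$, whose $L[0]$-eigenvalues are $\ell_i$, maps to a Cauchy sequence $(Z(a_i))$ in $Q$ of classical quasimodular forms whose weights are those same $\ell_i$, and the problem reduces to Serre's theorem about weight space $X$.

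Concretely, the continuity statement preceding Theorem \ref{thmcommdiag} shows that $(Z(a_i))$ is Cauchy in $Q \subseteq M_p$, while the definition of $\widehat{Z}$ as the continuous extension gives $\widehat{Z}(a) = \lim_i Z(a_i)$. Serre's theorem, applied to the sequence $(Z(a_i))$ of (quasi)modular forms of weights $\ell_i$ converging in the sup-norm to $\widehat{Z}(a) \in M_p$, simultaneously produces a $p$-adic weight $\ell \in X$ with $\ell_i \to \ell$ and identifies $\widehat{Z}(a)$ as a $p$-adic modular form of weight $\ell$. This gives both the first assertion ($\ell_i$ is Cauchy in $X$) and the graded statement for $\widehat{Z}$ in one stroke; the preservation of $X$-weights is then tautological from the construction.

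I anticipate two technical points, the more serious of which is the main obstacle. First, Section \ref{Spadic} phrases Serre's weight theorem for modular forms, whereas we need it for quasimodular forms $Q = \QQ_p[E_2,E_4,E_6]$; this is handled by the inclusion $M' \subseteq M_p$ (noting that $P$ acquires $p$-adic weight $2$) and by decomposing elements of $Q$ into weight-homogeneous components, so that the classical $\ZZ$-grading on $Q$ is compatible with the $X$-valued $p$-adic weight on $M_p$. Second, and this is the main obstacle, Serre's theorem requires a nonzero limit in order to extract $\ell$ from $\widehat{Z}(a)$ alone: in the degenerate case $\widehat{Z}(a) = 0$ (or $Z(a_i) = 0$ cofinally) the $Z$-image contains no information about the weights, and one must interpret the theorem as a statement about the sequence $(a_i)$ rather than the limit $a$, or else impose the natural nondegeneracy hypothesis $\widehat{Z}(a) \neq 0$ under which well-definedness of $\ell$ and independence of the chosen Cauchy sequence both follow from Serre.
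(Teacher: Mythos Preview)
Your proposal is correct and follows essentially the same route as the paper: apply Theorem \ref{thmgdd} so that $(Z(a_i))$ is a Cauchy sequence of quasimodular forms of weights $\ell_i$, then invoke Serre's theorem to obtain $\ell_i \to \ell$ in $X$ and identify $\widehat{Z}(a)=\lim_i Z(a_i)$ as a $p$-adic modular form of weight $\ell$ via Theorem \ref{thmcommdiag}. Your additional remarks on the quasimodular-versus-modular issue and on the degenerate case $\widehat{Z}(a)=0$ go beyond what the paper addresses explicitly, but the core argument is the same.
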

 \begin{proof}
 As a consequence of Theorem \ref{thmgdd}  $(Z(a_i))$ is a Cauchy sequence
 in the normed space $Q$ of quasimodular forms  and $Z(a_i)$ has weight $\ell_i$.\  Now Serre's theorem says that $(\ell_i)$
 has a limit $\ell\in X$.\ This is the weight of the $p$-adic modular form $\lim_i Z(a_i)=\widehat{Z}(a)$,
 the latter equality following from Theorem \ref{thmcommdiag}.
 \end{proof}
  
  The fact that the limit states  $a\in S_1$ occuring in Theorem \ref{thmXwt} have an $X$-weight in the sense defined
  above is a statement that makes sense  independently of the theory of $p$-adic modular forms.\
  Nevertheless we needed Serre's theorem to
  prove the statement - we know of no other proof.\ 
  
  \medskip
  We also point out that this notion of $X$-weight is an extension of the conformal weight of states in $S_{alg}$.\ That is,
  if $a\in (S_{alg})_{[\ell]}$ has square bracket conformal weight $\ell$ then $a$ has $X$-weight $\ell$.
  \section{Virasoro action and the Hamiltonian spectra}
  We have seen that some states in the $p$-adic Heisenberg algebra $S_1$ acquire a weight in the weight space $X$, but we have not explicitly considered analogs of the Virasoro axioms (a)-(d) of Subsection \ref{SSVOA}
 as they may, or may not, pertain to $S_1$.\ We take up this topic in this Section.\ In particular we prove
 (Corollary \ref{corpVOAHeis}) that $S_1$ is indeed a $p$-adic vertex operator algebra.
  
  \subsection{Virasoro action}\label{SSViract}
 We remind the reader
  that if $a\in S_{alg}$ is an algebraic state then all of  its modes $a(n)$ act continuously on $S_{alg}$ and hence they have a unique extension to 
  a continuous map on the completion $S_1$, also denoted by $a(n)$.
   
  \begin{lem}\label{lemS1Vir} As operators on $S_1$ the Virasoro modes $L(n)$ close on the Virasoro algebra with $c=1$.
  \end{lem}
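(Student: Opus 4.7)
The plan is to show that the classical Virasoro relations for $S_{alg}$ extend to $S_1$ by a density and continuity argument. The key observations are that (i) the Virasoro vector $\omega = \tfrac{1}{2}h(-1)^2\mathbf{1}$ belongs to $S_{alg}$ (regardless of the parity of $p$; only its membership in the integral lattice $U$ depends on $p$), and (ii) $S_{alg}$ is dense in $S_1$.

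First I would establish that each mode $L(n)$ extends to a bounded operator on $S_1$. Since $(S_{alg}, Y, \mathbf{1}, |\bullet|)$ is a normed algebraic vertex algebra, the compatibility condition (\ref{pfddef1}) together with the relation $Y(\omega, z)=\sum_n L(n)z^{-n-2}$ (so $L(n)=\omega(n+1)$) gives $|L(n)b|\leq |\omega|\,|b|$ for every $b\in S_{alg}$ and every $n\in\mathbf{Z}$. Hence each $L(n)$ is uniformly bounded on the dense subspace $S_{alg}\subseteq S_1$ and admits a unique continuous extension to $S_1$, which we continue to denote $L(n)$. This is what justifies reading the statement of the Lemma at the level of operators on $S_1$.

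Next I would verify that the composition $L(m)L(n)$ on $S_1$ is the continuous extension of the composition of the corresponding operators on $S_{alg}$. Because $S_{alg}$ is invariant under each $L(n)$, the continuous operator $L(m)L(n)$ on $S_1$ agrees on $S_{alg}$ with the algebraic composition, so by uniqueness of continuous extensions from a dense subspace we may freely identify the two. Consequently the commutator $[L(m),L(n)]$, a difference of bounded operators, is itself a bounded operator on $S_1$ that restricts on $S_{alg}$ to the algebraic commutator.

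Finally, on $S_{alg}$ the algebraic VOA structure (central charge $c=1$ for the Heisenberg) gives
\[
[L(m),L(n)]=(m-n)L(m+n)+\delta_{m,-n}\tfrac{m^3-m}{12}\,\mathrm{Id}.
\]
Both sides are continuous operators on $S_1$ that coincide on the dense subspace $S_{alg}$, so they coincide on all of $S_1$. There is no real obstacle here; the only care needed is in Step 2, to confirm that "extension commutes with composition," but this is immediate from density once boundedness is in hand via (\ref{pfddef1}).
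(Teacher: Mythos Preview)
Your proposal is correct and follows essentially the same density-and-continuity argument as the paper: the paper's proof simply takes a Cauchy sequence $(a_i)$ in $S_{alg}$ with limit $a\in S_1$ and passes the Virasoro relation through the limit, exactly as you do. Your version is more explicit about why each $L(n)$ is bounded (via (\ref{pfddef1}) applied to $\omega$) and about the compatibility of extension with composition, but these are just the details underlying the paper's phrase ``This follows directly from continuity.''
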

  \begin{proof} This follows directly from continuity.\ If $(a_i)$ is a Cauchy sequence of states in $S_{alg}$ with $a:=\lim_i a_i\in S_1$ then
  $[L(m), L(n)]a=\lim_i ([L(m), L(n)]a_i)=\lim_i \{ ((m-n)L(m+n)+\delta_{m, -n}\tfrac{m^3-m}{12}Id_{S_{alg}})a_i\}=((m-n)L(m+n)+\delta_{m, -n}\tfrac{m^3-m}{12}Id)a$,
  and the Lemma follows.
  \end{proof}

 This shows that Axiom (a) of Section \ref{SSVOA} holds, and Axiom (b) is similarly unchanged as one easily sees by a continuity argument similar to the previous proof.\ Axiom (c) cannot possibly hold in its entirety on account of the
uncountable dimension of $S_1$.\ In fact we have the following precise description of the spectrum of $L(0)$ as it acts on $S_1$.

\begin{thm}\label{thmL0} As an operator on $S_1$ the Hamiltonian $L(0)$ has a pure point spectrum equal to $\mathbf{Z}_{\geq0}$.\
The
span of the $L(0)$-eigenstates  is equal to the algebraic vertex subalgebra $S_{alg}$.
\end{thm}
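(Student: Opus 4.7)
The proof rests on Lemma~\ref{lemalt}, which gives every $a\in S_1$ a unique expansion
\[
  a=\sum_{\ell\geq 0} v_\ell, \qquad v_\ell\in (S_{alg})_\ell,\quad |v_\ell|\to 0.
\]
Uniqueness reflects the fact that our norm on $S_{alg}$ is the sup-norm in the homogeneous PBW basis~(\ref{Hbase}), so that the conformal grading $S_{alg}=\bigoplus_\ell (S_{alg})_\ell$ is $p$-adically orthogonal and passes to an orthogonal decomposition of $S_1$.

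First I would verify that $L(0)$ extends by continuity to a bounded operator on $S_1$ via
\[
  L(0)\Bigl(\sum_\ell v_\ell\Bigr)=\sum_\ell \ell\, v_\ell,
\]
which converges because $|\ell|_p\leq 1$ yields $|\ell v_\ell|\leq |v_\ell|\to 0$. This agrees on each algebraic piece $(S_{alg})_\ell$ with the usual action of $L(0)$, so by uniqueness of continuous extensions it is the operator already at play in Subsection~\ref{SSViract}.

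For the spectral analysis, suppose $L(0)a=\lambda a$ for some $\lambda\in\mathbf{Q}_p$ and nonzero $a\in S_1$. Writing $a=\sum_\ell v_\ell$ gives $\sum_\ell (\ell-\lambda)v_\ell=0$, and uniqueness of the decomposition forces $(\ell-\lambda)v_\ell=0$ for every $\ell$. Choosing $\ell_0$ with $v_{\ell_0}\neq 0$ then yields $\lambda=\ell_0\in\mathbf{Z}_{\geq 0}$ and $v_\ell=0$ for $\ell\neq\ell_0$, so $a=v_{\ell_0}\in (S_{alg})_{\ell_0}\subseteq S_{alg}$. Conversely, each $(S_{alg})_\ell$ is a nonzero $L(0)$-eigenspace with eigenvalue $\ell$. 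Thus the set of eigenvalues is precisely $\mathbf{Z}_{\geq 0}$, the $\ell$-eigenspace is exactly $(S_{alg})_\ell$, and the span of all $L(0)$-eigenstates is $\bigoplus_\ell (S_{alg})_\ell=S_{alg}$.

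The main technical hurdle is the uniqueness of the decomposition $a=\sum v_\ell$: without it, genuine limit eigenvectors outside $S_{alg}$ could in principle appear, and the bulk of the spectral claim would collapse. Once this $p$-adic orthogonality---inherited from the homogeneous basis~(\ref{Hbase})---is secured, every remaining step is a routine continuity manipulation.
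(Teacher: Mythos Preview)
Your argument correctly identifies the point spectrum of $L(0)$ and the span of its eigenstates; that portion matches what the paper cites from \cite{FM1}, Proposition 7.3, and your orthogonality reasoning via Lemma~\ref{lemalt} is sound. However, you have not proved the full statement. ``Pure point spectrum equal to $\mathbf{Z}_{\geq 0}$'' means that the \emph{entire} spectrum of $L(0)$ is $\mathbf{Z}_{\geq 0}$, not merely the set of eigenvalues. Concretely, for each $\lambda\in\mathbf{Q}_p\setminus\mathbf{Z}_{\geq 0}$ you must show that $L(0)-\lambda I$ is \emph{invertible} on $S_1$, and you have only shown injectivity. The paper's proof is devoted almost entirely to this missing half: it establishes surjectivity by proving that the range of $L(0)-\lambda I$ is closed (and hence, containing the dense subspace $S_{alg}$, is all of $S_1$).

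This surjectivity is not a formality that your decomposition immediately yields. The natural candidate for an inverse sends $b=\sum_\ell w_\ell$ to $\sum_\ell (\ell-\lambda)^{-1}w_\ell$, but for $\lambda\in\mathbf{Z}_p\setminus\mathbf{Z}_{\geq 0}$ the factors $|\ell-\lambda|_p$ can be arbitrarily small (e.g.\ take $\lambda=-1$ and $\ell=p^k-1$), so there is no uniform bound guaranteeing $(\ell-\lambda)^{-1}w_\ell\to 0$. One therefore needs a genuine argument---such as the closed-range argument in the paper---to conclude that $\lambda$ lies in the resolvent set rather than in the continuous spectrum.
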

\begin{proof}\ Concerning $\mbox{spec} L(0)$, what has to be shown is that if
$\lambda\in\mathbf{Q}_p$ then either
$\lambda\in\mathbf{Z}_{\geq0}$ or else 
$L(0)-\lambda I$ is an invertible operator on $S_1$.\ The second statement of the Theorem says, in effect, that upon passing from $S_{alg}$ to $S_1$ the operator $L(0)$ acquires \textit{no new eigenstates}.\
This is proved in \cite{FM1}, Proposition 7.3, so we concentrate on showing that $L(0)-\lambda I$ is invertible if 
$\lambda\notin\mathbf{Z}_{\geq0}$.

\medskip
Fix such a $\lambda$ and let $R$ be the range of $L(0)-\lambda I$.\ We claim that it suffices to show that $R$ is closed in $S_1$.\
Indeed,
$L(0)-\lambda I$ is injective and leaves each homogeneous subspace $(S_{alg})_m$ invariant.\ Because $(S_{alg})_m$ is finite-dimensional it lies in $R$, and therefore $S_{alg}\subseteq R$.\
So if $R$ is closed then we have $R=S_1$ because $S_{alg}$ is dense.\ This proves that $L(0)-\lambda I$ is surjective as well as injective and we are done.

\medskip
Now we prove that $R$ is complete and therefore closed.\ From Lemma \ref{lemalt} it is
 obvious that 
 $$ R=\left\{ \sum_m (m-\lambda)v_m \mid v_m\in (S_{alg})_m, v_m\rightarrow 0  \right\}.$$
 
 Let $(a^i)$ be a Cauchy sequence of states in $R$ with
$$a^i :=\sum_n (n-\lambda)v_n^i\ \ (v_n^i\in V_n).$$
Thus $\forall \epsilon>0, \exists N$ such that $\forall i, j>N$ we have $|a^i-a^j|<\epsilon$.

\medskip
Fix any $m\geq0$.\ We claim that $(v^i_m)_i$ is a Cauchy sequence in $(S_{alg})_m$.\ We have
$a^i-a^j=\sum_{n\geq 0} (n-\lambda)(v_n^i-v_n^j)$,
and hence
$\mid a^i-a^j\mid = \sup_n |n-\lambda||v_n^i-v_n^j|$
and $\forall \epsilon>0, \exists N$ such that $\forall i, j>N$ we have $|a^i-a^j|<\epsilon|m-\lambda|$.\ Noting that $m-\lambda\neq0$, then
 $\forall \epsilon>0, \exists N$ such that $\forall i, j>N$ we have
$$|v_m^i-v_m^j|  = \tfrac{1}{|m-\lambda|}  |m-\lambda||v_m^i-v_m^j| \leq\tfrac{1}{|m-\lambda|}|a^i-a^j|<\epsilon.$$
This proves the Claim.\ Since $V_m$ is finite-dimensional it is closed in $S_1$, hence the limit $u_m$ of the Cauchy sequence $(v_m^i)_i$
lies in $V_m$.\
Consider $a:= \sum_{m\geq 0}(m-\lambda)u_m\in R$.\ We have
$a^i-a = \sum_{m\geq 0} (m-\lambda)(v_m^i-u_m)$
and
$$ \lim_i |a^i-a| = \lim_i\sup_m |m-\lambda||v_m^i-u_m|\leq\max\{1, |\lambda|\}\lim_i\sup_m|v_m^i-u_m|=0,$$
the last equality holding because $u_m=\lim_i v_m^i$ for every $m$.\ So $\lim_i a^i=a\in R$ and the proof of the Theorem is finished.
\end{proof}

\begin{cor}\label{corpVOAHeis}
$S_1$ is a $p$-adic vertex operator algebra.
\end{cor}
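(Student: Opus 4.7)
The plan is to verify the four axioms of Definition \ref{defpVOA} for $V = S_1$, relying heavily on the results already in hand. Three of the four axioms will follow essentially by bookkeeping from previous constructions, while axiom (iii) is where Theorem \ref{thmL0} does the heavy lifting.

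First I would note that axiom (i) has already been established: in Subsection \ref{SSU} the algebraic Heisenberg VOA $S_{alg}$ was equipped with a compatible norm via the monomial basis \eqref{Hbase}, and Theorem \ref{thmcomp} identifies its completion $S_1$ as a $p$-adic vertex algebra with vacuum $\mathbf{1}$. Axiom (ii) requires that $\omega$ (which lives in $S_{alg}$, and hence in $S_1$) satisfies the Virasoro relations with some central charge and that translation covariance $[L(-1), Y(a,z)] = \partial_z Y(a,z)$ holds. The Virasoro relations with $c = 1$ on $S_1$ are exactly the content of Lemma \ref{lemS1Vir}. For translation covariance, I would use the same continuity argument: the identity $[L(-1), Y(a,z)] = \partial_z Y(a,z)$ is known on $S_{alg}$, and for $a = \lim_i a_i$ with $a_i \in S_{alg}$ the continuity of $Y$ (together with the fact that each coefficient of $\partial_z Y(a_i,z)$ converges to the corresponding coefficient of $\partial_z Y(a,z)$) allows one to pass to the limit in each mode.

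Next I would check axiom (iii). By Theorem \ref{thmL0}, the $\mathbf{Q}_p$-span $U$ of the $L(0)$-eigenstates in $S_1$ equals $S_{alg}$, and the point spectrum of $L(0)$ on $S_1$ is $\mathbf{Z}_{\geq 0}$. Thus $U = \bigoplus_{\ell \geq 0} U_\ell$ with $U_\ell = (S_{alg})_\ell$, and this is precisely the original weight decomposition of the algebraic Heisenberg VOA. The conditions of Subsection \ref{SSVOA}(c),(d) --- that each $U_\ell$ is a finite-rank free $\mathbf{Q}_p$-module, that the grading is bounded below, and that $U_\ell$ is the $\ell$-eigenspace of $L(0)$ --- are then immediate from the classical theory of $S_{alg}$.

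Finally, axiom (iv) --- that $U$ is dense in $V$ --- is the defining property of the completion: $S_{alg}$ is dense in $S_1$ by construction. Combining these four points yields the corollary. The only step that required genuine new work is axiom (iii), which rests on Theorem \ref{thmL0}; the other verifications are continuity arguments plus invocation of the algebraic structure already present on the dense subspace $S_{alg}$.
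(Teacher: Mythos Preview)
Your proposal is correct and follows essentially the same approach as the paper: verify axioms (i)--(iv) of Definition \ref{defpVOA} by citing the $p$-adic vertex algebra structure already established, Lemma \ref{lemS1Vir} (plus the continuity argument for translation covariance), and Theorem \ref{thmL0} to identify $U$ with $S_{alg}$, which is dense by construction. The paper's own proof is more terse but the logical structure is identical.
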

\begin{proof}
  Recall the definition of $p$-adic vertex operator algebra  (Definition \ref{defpVOA} in Section \ref{SpVOA}.) We already know that $S_1$ is a $p$-adic vertex algebra.\ Next, Lemma \ref{lemS1Vir} and the ensuing discussion shows that part (ii) of Definition \ref{defpVOA} is satisfied.\ Let $U$ be the 
  $\mathbf{Q}_p$-span of the eigenstates for 
  $L(0)$.\ By Theorem \ref{thmL0}, $U$ is the underlying linear space for $S_{alg}$ and in particular $U$ is dense in $S_1$.\ Thus parts (iii) and  (iv) of Definition \ref{defpVOA} are also satisfied and the Corollary is proved.
\end{proof}
\subsection{The unbounded operator $L[0]$} In $S_{alg}$ the Hamiltonians $L(0)$ and
$L[0]$ have identical point spectra.\ This is a consequence of the  isomorphism between
$(V, Y, \mathbf{1}, \omega)$ and $(V, Y[\ ], \mathbf{1}, \widetilde{\omega})$ (cf.\ the discussion in Subsection \ref{SSsqbracket}).\ Thus one might
think that $L[0]$ also satisfies the conclusions of Theorem \ref{thmL0}.\ We shall explain in this Subsection that this is far from true.

\medskip
First we reconsider Theorem \ref{thmXwt}.\ We will adopt the notation used there.\ That discussion did not utilize the equalities
$L[0]a_i=\ell_i a_i$, but we use them now.\ Consider the following instructive, but \textit{fake} argument with $a:=\lim_i a_i\in S_1$:
\begin{eqnarray}\label{fake}
L[0]a=L[0] \lim a_i = \lim_i L[0]a_i = \lim_i \ell_i a_i =(\lim_i \ell_i)(\lim_i a_i) = \lambda a
\end{eqnarray}
 where $\lambda:= \lim_i \ell_i$ considered as a limit
in $\mathbf{Z}_p$ rather than $X$.\ 

\medskip
This argument suggests that $L[0]$ has plenty of eigenstates in $S_1$ belonging to eigenvalues $\mathbf{Z}_p$ that are not necessarily rational integers.\
In fact this is  correct in a sense and we shall give a precise formulation in Theorem \ref{thmL0spec} below.\ However (\ref{fake}) does not establish this
because
it tacitly assumes that $L[0]$ is  a continuous operator on $S_1$ so that it passes through the limit to get the second equality.\ The formula for
$L[0]$ in (\ref{Virmodes}) suggests that this may not be true and that we must treat $L[0]$ as an \textit{unbounded operator}.

\medskip
To handle this situation we introduce some additional normed spaces following \cite{FM1}, Section 9 and \cite{FM2}, Section 2.6.\ We use the notation
$\sum a_I h^I$ for states in $S_{alg}$ as described in Subsection \ref{SSU}.\ Additionally, if $I=\{n_1, \hdots, n_k\}$ we set $|I|=\sum n_i$.\ 
For a real number $R\geq 1$, we introduce a new ultrametric norm on  $S_{alg}$ as follows:
\begin{eqnarray}\label{SRnormdef}
\left|\sum_I a_I h^I \right|_R := \sup_I |a_I| R^{|I|}.
\end{eqnarray}
Let $S_R$ denote the completion of  $S_{alg}$  equipped with the norm $|\ |_R$.\ $S_R$ is in fact a $p$-adic Banach space \cite{FM1} that coincides with our $p$-adic Heisenberg vertex algebra when $R=1$ (whence the notation).\ We have in general $S_{alg}\subseteq S_{R_1} \subseteq S_{R_2}\subseteq S_1$ whenever $R_1\geq R_2\geq 1$
and 
\begin{eqnarray*}
S_R=\left\{\sum_I a_Ih^I \mid \lim_I |a_I| R^{|I|} = 0 \right\}.
\end{eqnarray*}

\medskip
To get to the point of all of this, one can show \cite{FM2}, Theorem 3.4 that if $R> p^{1/p}$ then $L[0]$ leaves $S_R$ invariant and acts continuously on it.\
One thus considers $L[0]$ as an unbounded operator on $S_1$ according to the maps
\begin{eqnarray*}
S_R\stackrel{L[0]}{\longrightarrow} S_R \stackrel{\iota}{\longrightarrow} S_1
\end{eqnarray*}
where $\iota$ is the set-theoretic inclusion of the underlying linear spaces.\ $\iota$ is an open but  not continuous map.\ For general background dealing with similar scenarios see e.g.,
\cite{DR}, Chapter 6.

\medskip
We must now look for Cauchy sequences $(a_i)$ in $S_{alg}$  with a limit $a\in S_{R}$.\ Returning to
the situation of
(\ref{fake}), the argument is correct when we regard the first limit as one in $S_R$ because $L[0]$ is continuous there and we may move it past the limit sign.\ We then have \cite{FM2}, Theorem 4.4:
\begin{thm}\label{thmL0spec} Suppose that $p^{1/p}<R <p^{1/(p-1)}$.\ Then for any $\lambda\in\mathbf{Z}_p$ there is a state $a\in S_R$ such that $L[0]a=\lambda a$.\ $\hfill\Box$
\end{thm}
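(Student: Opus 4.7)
The plan is to realize $a$ as a $|\cdot|_R$-limit of a carefully chosen Cauchy sequence of algebraic $L[0]$-eigenstates $(a_i)$ in $S_{alg}$ whose integer eigenvalues $\ell_i$ converge to $\lambda$ in $\mathbf{Z}_p$, and then to transfer the eigenvalue equation to the limit by continuity of $L[0]$.

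First, using the density of $\mathbf{Z}$ in $\mathbf{Z}_p$, I would pick positive integers $\ell_i$ with $\ell_i \to \lambda$. For each $i$, the square-bracket eigenspace $(S_{alg})_{[\ell_i]}$ is nonzero and finite-dimensional, so any nonzero $a_i \in (S_{alg})_{[\ell_i]}$ satisfies $L[0] a_i = \ell_i a_i$. The crux is not producing each $a_i$ individually but making a coherent choice so that $(a_i)$ is $|\cdot|_R$-Cauchy with $\sup_i |a_i|_R < \infty$. The natural mechanism is a Kummer-style $p$-adic interpolation at the level of states: by Theorem \ref{thmgdd} the renormalized $1$-point function $Z$ sends $a_i$ to a quasimodular form of weight $\ell_i$, so one starts from an appropriate Cauchy family of Eisenstein-like quasimodular forms at these weights (which converge in $M_p$ by the Kummer congruences recalled in Section \ref{Spadic}) and lifts this convergence back through $Z$ to produce $|\cdot|_R$-convergent preimages. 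The constraints $p^{1/p} < R < p^{1/(p-1)}$ are calibrated for this: the lower bound ensures $L[0]$ acts continuously on $S_R$ (as noted just before the theorem statement), while the upper bound controls the growth of the coefficients coming from the square-bracket change of variables $z \mapsto e^z - 1$ defining $Y[a,z]$.

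Granting such a sequence, set $a := \lim_i a_i \in S_R$. Since $L[0]$ is continuous on $S_R$ it commutes with the limit, giving
\[
L[0] a \;=\; \lim_i L[0] a_i \;=\; \lim_i \ell_i a_i.
\]
The ultrametric estimate
\[
|\ell_i a_i - \lambda a|_R \;\leq\; \max\bigl\{|\ell_i - \lambda|\,|a_i|_R,\; |\lambda|\,|a_i - a|_R\bigr\},
\]
together with $\ell_i \to \lambda$ in $\mathbf{Z}_p$, $a_i \to a$ in $S_R$, and the boundedness of $(|a_i|_R)$, then forces $\lim_i \ell_i a_i = \lambda a$, so $L[0] a = \lambda a$, as required.

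The hard part is the middle step: synchronising the normalizations of the $a_i$ across different weights so that their differences die in $|\cdot|_R$. Producing a single eigenstate with prescribed integer eigenvalue is easy because each $L[0]$-eigenspace is finite-dimensional, but assembling a whole Cauchy family whose eigenvalues realize a prescribed $p$-adic target requires genuine $p$-adic input beyond pure linear algebra. It is precisely here that the Kummer-style congruences, transported from $M_p$ back to the state space via $\widehat{Z}$, do the work, and where the upper bound $R < p^{1/(p-1)}$ becomes essential — looser values of $R$ would break the convergence of the interpolation coefficients, while tighter values would fail to give a non-trivial range at all.
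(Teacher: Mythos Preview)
Your approach mirrors the paper's: realize the eigenstate as an $|\cdot|_R$-limit of algebraic $L[0]$-eigenstates obtained by lifting a Kummer-convergent family of Eisenstein series through $Z$ (this is Theorem~\ref{thmEk}, whose details are in \cite{FM2}), and then pass to the limit using the continuity of $L[0]$ on $S_R$ for $R>p^{1/p}$. Your ultrametric estimate is precisely the rigorous form of the display~(\ref{fake}).

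There is one genuine gap. The Eisenstein/Kummer mechanism only operates at \emph{even} integer weights $\ell_i$, since nonzero quasimodular (and $p$-adic modular) forms live in even weight; if you chose odd $\ell_i$ the images $Z(a_i)$ would all vanish and there would be no congruences to transport back. Hence your construction, as written, produces eigenstates only for $\lambda\in 2\mathbf{Z}_p$. For odd $p$ this is all of $\mathbf{Z}_p$ and nothing is lost, but for $p=2$ it misses every odd $2$-adic integer. The paper closes this with an extra step you omit: from (\ref{Virmodes}) one sees $L[-1]=L(0)+L(-1)$ is bounded on $S_R$, and the Virasoro relation $[L[0],L[-1]]=L[-1]$ shows that $L[-1]$ shifts $L[0]$-eigenvalues by $+1$, so applying $L[-1]$ to an eigenstate of even eigenvalue yields one of odd eigenvalue. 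One must also verify that the resulting state---and indeed your limit $a$ itself---is nonzero; the paper secures this via $\widehat{Z}(a)=E_k^*\neq 0$ for $k\neq 0$. A minor point of attribution: the paper ties the upper bound $R<p^{1/(p-1)}$ to the feasibility of constructing the Cauchy sequences in $S_R$ (which becomes harder as $R$ grows), rather than directly to the exponential change of coordinates.
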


We have already explained the reason for the lower bound on $R$.\ The upper bound  arises from the need to construct suitable Cauchy sequences
$(a_i)$, which gets more burdensome with increasing $R_1$.\ As in Subsection \ref{SSX} and  especially Theorem \ref{thmXwt}, this process is intimately related to the connections with $p$-adic modular forms.\ The next Section is devoted to this subject.

\medskip
Finally, we mention a more specialized result, cf.\ \cite{FM2}, Corollary 6.9:
\begin{thm}\label{thmp=2} Suppose that $p=2$ and that $2^{1/2}<R<2^{3/4}$.\ Then the weight $0$ eigenspace for $L[0]$ in its action on $S_R$ is infinite-dimensional.
$\hfill\Box$
\end{thm}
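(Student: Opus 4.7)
The plan is to exhibit infinitely many linearly independent eigenstates of $L[0]$ with eigenvalue $0$ in $S_R$ by running the construction underlying Theorem \ref{thmL0spec} with many distinct families of data. Concretely, I would parameterize the construction by an integer $m \geq 0$, and for each $m$ build a Cauchy sequence $(a_i^{(m)})_i$ in $S_{alg}$ consisting of square-bracket eigenstates $a_i^{(m)} \in (S_{alg})_{[k_i^{(m)}]}$ with integer weights $k_i^{(m)} \to 0$ in $\mathbf{Z}_2$ (for instance $k_i^{(m)} = 2^{i+m}$, or a Serre-admissible variant), chosen so that each sequence is Cauchy in the norm $|\cdot|_R$. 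Since $R > 2^{1/2}$, the operator $L[0]$ is continuous on $S_R$ (by \cite{FM2}, Theorem 3.4), so the limits $a^{(m)} := \lim_i a_i^{(m)} \in S_R$ satisfy
\[
L[0] a^{(m)} = \lim_i L[0] a_i^{(m)} = \lim_i k_i^{(m)} a_i^{(m)} = 0,
\]
because $k_i^{(m)} \to 0$ in $\mathbf{Z}_2$ and $(a_i^{(m)})_i$ is bounded in $S_R$.

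The heart of the argument is the choice of the states $a_i^{(m)}$. I would use the square-bracket creation operators to produce, for each $m$, an eigenstate of square-bracket weight $k_i^{(m)}$ that contains an identifiable ``signature'' monomial in the basis (\ref{Hbase}) distinguishing it from the signatures chosen for other values of $m$. This reduces linear independence of the limits $\{a^{(m)}\}_{m \geq 0}$ in $S_R$ to a combinatorial statement about coefficients in the round-bracket basis, which one reads off from the leading term of the expansion.

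The main obstacle is to ensure that the constructed sequences are genuinely Cauchy in the $|\cdot|_R$-norm, and this is precisely where the upper bound $R < 2^{3/4}$ enters. Expanding the square-bracket eigenstates into the basis (\ref{Hbase}) introduces coefficients whose $2$-adic valuations are controlled by the denominators $\tfrac{(-1)^{n+1}}{n(n+1)}$ appearing in (\ref{Virmodes}), combined with the Virasoro commutation relations. Tracking these valuations carefully, the factor $R^{|I|}$ in (\ref{SRnormdef}) can be absorbed precisely when $R < 2^{3/4}$; beyond that threshold the Cauchy estimates fail. I expect this bookkeeping — making the denominators and the growth of $|I|$ play off correctly against the allowed $R$ — to be the most delicate technical step.

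As a cross-check, one could independently confirm the infinite-dimensionality of the image by composing the family $\{a^{(m)}\}$ with $\widehat{Z}$ from Theorem \ref{thmcommdiag}: by Theorem \ref{thmXwt} each $\widehat{Z}(a^{(m)})$ is a weight-$0$ $2$-adic modular form, and the space of weight-$0$ $2$-adic modular forms is already known to be infinite-dimensional (e.g.\ via Serre's observation that $j^{-1}$ and related meromorphic forms become holomorphic $2$-adic modular forms of weight $0$). Showing that the $\widehat{Z}(a^{(m)})$ can be arranged to be linearly independent would give an alternative route to independence, albeit one contingent on control of $\widehat{Z}$ on $S_R$.
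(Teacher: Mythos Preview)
Your primary plan has a genuine gap at the independence step. The construction behind Theorem \ref{thmL0spec} (via Theorem \ref{thmEk}) produces, for each target weight $k\in 2X$, essentially \emph{one} limit state in $S_R$ whose $\widehat{Z}$-image is the prescribed $p$-adic Eisenstein series. If you merely vary the approximating integer sequence $k_i^{(m)}=2^{i+m}$ while keeping the same limit weight $0$, there is no mechanism forcing the limits $a^{(m)}$ to be distinct, let alone linearly independent: different approximating sequences can perfectly well converge to the same state, or to states differing by something in $\ker\widehat{Z}$ that you have no control over. Your appeal to ``signature monomials'' in the round-bracket basis is not enough to rescue this, because the change from square-bracket eigenstates to the round-bracket basis mixes monomials extensively, and you have offered no argument that a chosen monomial survives the limit with a nonzero coefficient depending on $m$.

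What the paper actually does is precisely what you relegate to a ``cross-check''. One shows by an explicit (and somewhat painful) computation that $j^{-1}$ lies in the image of $\widehat{Z}$ restricted to $S_R$ for $2^{1/2}<R<2^{3/4}$; it then follows that the whole field $\mathbf{Q}_2(j^{-1})$, an infinite-dimensional space of weight-$0$ $2$-adic modular forms, lies in the image. Since $\widehat{Z}$ is linear and weight-preserving, the preimages of any infinite independent family in $\mathbf{Q}_2(j^{-1})$ are automatically independent weight-$0$ $L[0]$-eigenstates in $S_R$. Independence thus comes for free from independence of the \emph{images}, which is where your argument was stuck. Note also that the sharper upper bound $R<2^{3/4}$ (tighter than the $R<2$ of Theorem \ref{thmL0spec}) arises from the estimates needed in the explicit $j^{-1}$ calculation, not from the denominators in the formula \eqref{Virmodes} for $L[0]$ as you conjectured.
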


The restriction to $p=2$ is made for technical reasons.\ The result is likely to be true in much greater generality.\

\section{$p$-adic modular forms and the $p$-adic Heisenberg vertex algebra, II}\label{SHII}
In this Section we enlarge on the proofs of Theorems \ref{thmL0spec} and \ref{thmp=2}.\ We use the ideas and notation of Section \ref{Spadic}

\medskip
The connection between eigenstates for $L[0]$ and $p$-adic modular forms comes about from 
the commuting diagram of Theorem \ref{thmcommdiag} which we have set-up so that the  horizontal maps are weight-preserving.\
For $\widehat{Z}$ this means that  if
$a\in S_1$ and $\widehat{Z}(a)\in M_p$ is a $p$-adic modular form of weight $k\in X$ then 
$a$ also has weight $k$.

\medskip
 Because $Z$ is surjective then the image of $\widehat{Z}$ contains all
 quasimodular forms and it is natural to ask whether 
 $\widehat{Z}$ is itself a surjection.\ This question,
 which has been a motivating question for us all along, remains open.\ Theorems \ref{thmL0spec} and \ref{thmp=2}
 represent incremental steps towards its resolution.\
 
 \medskip
 What we must do is find a sequence of states $(a_i)$
  with $a_i\in (S_{alg})_{[k_i]}$  such that the sequence is Cauchy not just in $S_1$ but in $S_R$ with $R>p^{1/p}$ to ensure the continuity of $L[0]$.\ Furthermore we want $k_i\rightarrow k$
 in $X$.\ 
 To be clear, this is only possible if $k$ is an even weight.\ This is so because nonzero $p$-adic modular forms have a weight in \textit{even} weight space $2X$.

 \begin{thm}\label{thmEk} For any $0\neq k\in 2X$ there is a Cauchy sequence $(a_i)$ in $S_R$
 ($R$ as in Theorem \ref{thmL0spec}) such that $\lim_i Z(a_i)=E_k^*$.\ In particular
 $E_k^*\in im\ \widehat{Z}$.
 $\hfill\Box$
\end{thm}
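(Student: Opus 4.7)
The plan is to lift Serre's construction of $E_k^\ast$ as a $p$-adic limit of classical Eisenstein series $E_{k_i}$ (with $k_i\to k$ in $X$, $k_i\to\infty$ archimedeanly, and $k_{i+1}\equiv k_i\pmod{(p-1)p^i}$) to a Cauchy sequence of states inside $S_{alg}$, strengthened so that convergence takes place in the norm $|\cdot|_R$ rather than only in $|\cdot|_1$. Once this is done, the limit $a=\lim_i a_i$ lies in $S_R\subseteq S_1$, and by Theorem \ref{thmcommdiag} together with continuity of $\widehat{Z}$ on $S_1$ we obtain $\widehat{Z}(a)=\lim_i Z(a_i)=E_k^\ast$, so that $E_k^\ast\in\im\widehat{Z}$.

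\medskip
For the explicit lift I would invoke the Mason--Tuite evaluation of square-bracket $1$-point functions for the Heisenberg VOA: for $k_i\ge 2$ even, the two-letter state $h[-1]h[-k_i+1]\mathbf{1}\in(S_{alg})_{[k_i]}$ has $Z$-image equal to a nonzero rational multiple of the level-one Eisenstein series $G_{k_i}$. After rescaling by an explicit constant $c_i\in\mathbf{Q}_p$ essentially of the shape $-2k_i/B_{k_i}$, one obtains $a_i:=c_i\cdot h[-1]h[-k_i+1]\mathbf{1}$ with $Z(a_i)=E_{k_i}$ (with a small modification when $k=2$ to accommodate the passage from the quasimodular $E_2$ to $E_2^\ast$). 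Expanding back from square brackets to round brackets via $Y[a,z]=e^{\ell z}Y(a,e^z-1)$ writes each $a_i$ as a finite $\mathbf{Q}_p$-linear combination of round-bracket PBW monomials $h(-n_1)h(-n_2)\mathbf{1}$ with $n_1+n_2=k_i$, with coefficients expressible in terms of signed Stirling numbers of the first kind divided by factorials, together with the overall prefactor $c_i$.

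\medskip
The main obstacle is showing that $(a_i)$ is Cauchy in $S_R$ rather than merely in $S_1$. On the coefficient level, the Kummer congruences on $B_{k_i}$ control the prefactors $c_i$ and the congruence $k_{i+1}\equiv k_i\pmod{(p-1)p^i}$ controls the Stirling expansions, which depend polynomially on $k_i$; coefficient-by-coefficient one thus obtains $p$-adic decay of rate $p^{-i}$ for the differences. The dangerous factor is $R^{|I|}$ with $|I|=k_i\to\infty$ appearing in the definition of $|\cdot|_R$; to absorb it one uses $\ord_p(n!)\le n/(p-1)$ on the factorial denominators of the Stirling expansion, which forces a $p$-adic size of roughly $p^{-k_i/(p-1)}$ out of each coefficient of $a_i$. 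The upper bound $R<p^{1/(p-1)}$ is precisely the threshold at which $R^{|I|}$ is dominated by this decay, yielding $|a_i-a_j|_R\to 0$. The lower bound $R>p^{1/p}$ plays no role in this particular calculation; it enters via the companion Theorem \ref{thmL0spec}, where the continuity of the unbounded operator $L[0]$ on $S_R$ is needed, and it would subsequently allow one to read off that $a$ is an $L[0]$-eigenstate of eigenvalue $k$.
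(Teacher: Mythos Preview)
Your overall strategy matches the paper's outline (which defers all details to \cite{FM2}): choose $a_i\in (S_{alg})_{[k_i]}$ with $Z(a_i)$ approximating $E_k^\ast$ and verify that $(a_i)$ is Cauchy in $|\cdot|_R$, not merely in $|\cdot|_1$. The gap lies in your $|\cdot|_R$-estimate for the specific two-letter states $a_i=c_i\,h[-1]h[-k_i+1]\mathbf{1}$.

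The square-to-round change of basis is unipotent for the round-bracket weight filtration: the PBW expansion of $h[-1]h[-k_i+1]\mathbf{1}$ has leading term $h(-1)h(-(k_i-1))\mathbf{1}\in (S_{alg})_{k_i}$ with coefficient exactly $1$, and no factorial enters at this top degree. Hence $|a_i|_R \geq |c_i|\,R^{k_i}$. Kummer's congruences force $|c_i|=|2k_i/B_{k_i}|$ to converge to a nonzero $p$-adic limit (essentially a value of the $p$-adic zeta function), so $|c_i|$ is bounded away from $0$; since $R>1$ and $k_i\to\infty$ archimedeanly, $|a_i|_R\to\infty$ and the sequence is not even bounded in $S_R$. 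Moreover this top component of $a_{i+1}$ sits in round-bracket weight $k_{i+1}>k_i$, entirely outside the round-bracket support of $a_i$, so it appears undiminished in $a_{i+1}-a_i$ and no coefficient-by-coefficient congruence can cancel it. Your factorial remark also points the wrong way: $|1/n!|_p\approx p^{n/(p-1)}$ is \emph{large}; the bound $R<p^{1/(p-1)}$ is the threshold below which $R^n$ is dominated by this growth, not a source of decay for the coefficients themselves. The actual construction in \cite{FM2} must therefore use states in $(S_{alg})_{[k_i]}$ whose high round-bracket components are already $p$-adically small, and arranging this is where the real work lies; the naive two-letter preimages of $E_{k_i}$ do not suffice.
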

 
 We remind the reader that $E_k^*$ is the $p$-adic Eisenstein series of weight $k$ discussed in Section \ref{Spadic}.\ Theorem \ref{thmEk} is stated as Theorem 1.1 in  \cite{FM2} and proved in Sections 3 and 4 (loc.\ cit.)\  Together with $Z(\mathbf{1}) =1$, which has weight $0$, it handles all even weights in $X$.\ In particular by (\ref{fake}) we get Theorem \ref{thmL0spec} for all 
 $\lambda \in 2\mathbf{Z}_p$.

 \medskip
 To deal with odd weights, one uses (\ref{Virmodes}) to see that $L[-1]$ is a bounded operator on $S_R$.\ Then a standard calculation using $[L[0], L[-1]]=L[-1]$  shows that $L[-1]$ maps an $L[0]$-eigenstate with eigenvalue $\lambda$ to another eigenstate with eigenvalue $1+\lambda$.\ Furthermore we can choose such eigenstates to be nonzero, so that there are nonzero eigenstates for $L[0]$ having $X$-weight equal to any given 
 $\lambda\in X$.\ Then one deduces Theorem \ref{thmL0spec}.

 \medskip
 As for Theorem \ref{thmp=2}, we prove much more in \cite{FM2}, Theorem 1.1(b).\ Here is a watered down version that still implies Theorem \ref{thmp=2}.
 \begin{thm} If $p=2$ the subspace of 
 $\im \widehat{Z}$ spanned by the 
 $2$-adic modular forms of weight $0$ contains 
 $\mathbf{Q}_2(j^{-1})$. $\hfill\Box$
 \end{thm}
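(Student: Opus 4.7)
The plan is to realize $j^{-1}$ as the image of some state in $S_1$ under $\widehat{Z}$, and then to leverage closure properties to generate all of $\mathbf{Q}_2(j^{-1})$. Following the template of Theorem \ref{thmEk}, I would construct a Cauchy sequence $(a_i)$ in $S_{alg}$, convergent in the $R$-norm for some $R \in (2^{1/2}, 2^{3/4})$, with each $a_i$ a square-bracket eigenstate of weight $k_i$ satisfying $k_i \to 0$ in $X = \mathbf{Z}_2$ and with $Z(a_i) \to j^{-1}$ in $M_p$. The existence of such a sequence rests on the classical identity $j^{-1} = (E_4^3 - E_6^2)/(1728\, E_4^3)$, which exhibits $j^{-1}$ as a rational expression in $E_4$ and $E_6$ of net weight zero: $2$-adic congruences allow one to approximate $E_4^{-3}$ by polynomials in $E_4$, yielding a sequence of polynomial quasimodular forms $f_i$ that $2$-adically converges to $j^{-1}$, and Theorem \ref{thmZsurj} then furnishes preimages $a_i \in (S_{alg})_{[k_i]}$. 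The commuting diagram of Theorem \ref{thmcommdiag} together with the continuity of $\widehat{Z}$ identifies $\widehat{Z}(\lim a_i)$ with $\lim_i Z(a_i) = j^{-1}$, placing $j^{-1}$ inside $\im \widehat{Z}$.

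Next, the same construction applied to $j^{-n}$ for each $n \geq 1$ (still a weight-$0$ $2$-adic modular form, with analogous weight-zero rational expressions in $E_4, E_6$) yields $(j^{-1})^n \in \im \widehat{Z}$, and $\mathbf{Q}_2$-linearity gives $\mathbf{Q}_2[j^{-1}] \subseteq \im \widehat{Z}$ entirely in weight $0$. To extend to the full field $\mathbf{Q}_2(j^{-1})$, I would expand an arbitrary element as a power series $\sum_{n \geq 0} c_n (j^{-1})^n$ convergent in the sup-norm of $M_p$; this is possible because $j^{-1}$ has $q$-order $1$, so any denominator $g(j^{-1})$ with $g(0) \neq 0$ admits, after normalization, a convergent geometric expansion. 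Building the matching VOA lifts term-by-term and summing them in $S_R$ produces a Cauchy sequence whose image under $\widehat{Z}$ is the desired rational expression, provided the partial-sum states remain $R$-norm bounded; this is arranged by choosing homogeneous lifts with controlled norms from Step 1.

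The principal obstacle lies in the first step. The polynomial approximation of $j^{-1}$ has to take place in $M_p$ \emph{and} be realized by algebraic states whose $R$-norms stay uniformly bounded and Cauchy, so that the limit lives in $S_R$ where $L[0]$ acts continuously and the square-bracket weight passes to the limit in $X$. Balancing these two convergences---the $M_p$-convergence of the normalized trace images against the $R$-norm convergence of the underlying states---is the crux of the argument, and is where the hypothesis $p = 2$ enters essentially: $j^{-1}$ is a holomorphic $p$-adic modular form only at the small primes, and the specific interval $2^{1/2} < R < 2^{3/4}$ of Theorem \ref{thmp=2} is precisely the range in which the required delicate $2$-adic estimates on the Heisenberg states can be pushed through.
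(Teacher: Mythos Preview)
Your overall strategy matches the paper's: reduce to showing $j^{-1} \in \im \widehat{Z}$, which the paper also handles by explicit calculation (deferred entirely to \cite{FM2}, Section 5). There is, however, an internal inconsistency in your first step. You require homogeneous preimages $a_i \in (S_{alg})_{[k_i]}$ so that Theorem \ref{thmXwt} yields a well-defined $X$-weight for the limit, and since $Z$ is graded (Theorem \ref{thmgdd}) this forces each $f_i = Z(a_i)$ to be a homogeneous quasimodular form of weight $k_i$. But approximating $E_4^{-3}$ by ``polynomials in $E_4$'' produces mixed-weight forms, whose preimages under a graded map cannot lie in a single $(S_{alg})_{[k_i]}$. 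The repair is to use monomials: from $E_4 \equiv 1 \pmod{2^4}$ one gets $E_4^{3\cdot 2^k} \to 1$ in sup-norm, so the classical forms $\Delta \cdot E_4^{3(2^k-1)}$ of weight $12\cdot 2^k$ converge to $j^{-1}$ with weights tending to $0$ in $X=\mathbf{Z}_2$.

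Two smaller points. First, the full field $\mathbf{Q}_2(j^{-1})$ contains $j=(j^{-1})^{-1}$, which has a pole at $q=0$ and hence is not in $M_p$; your geometric-series expansion only treats denominators $g$ with $g(0)\neq 0$, so at best you recover the localization of $\mathbf{Q}_2[j^{-1}]$ at the ideal $(j^{-1})$, not the field. Second, the step you rightly flag as the ``crux''---selecting specific preimages $a_i$ that form a Cauchy sequence in $S_R$---is where all the content lies, since $\ker Z$ is large and an arbitrary lift of $f_i$ will not converge. You give no indication of how these lifts are chosen or why the estimates close up; the paper does not either, pointing instead to the explicit computations in \cite{FM2}.
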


The main point is to establish that $j^{-1}$ lies in the image of $\widehat{Z}$.\ This is done by a messy explicit calculation in \cite{FM2}, Section 5.
\bibliographystyle{amsplain}

\end{document}